\journalname{}
\date{ \phantom{b} \vspace{45mm}\phantom{e}}
\def\bigo{{\mathcal O}}
\def\real{{\mathbf R}}
\def\complex{{\mathbf C}}
\def\integer{{\mathbf Z}}
\def\bfz{{\mathbf z}}
\def\bfzeta{{\boldsymbol{\zeta}}}
\def\d{{\mathrm d}}
\def\e{{\mathrm e}}
\def\iu{\mathrm{i}}
\def\eps{\varepsilon}
\DeclareMathOperator{\sinc}{sinc}
\DeclareMathOperator{\tanc}{tanc}
\DeclareMathOperator{\sinch}{sinch}
\DeclareMathOperator{\tanch}{tanch}
\def\Re{{\mathrm{Re}\,}}
\def\Im{{\mathrm{Im}\,}}
\newdimen\GGGlength
\newdimen\GGGheight
\newbox\GGGbox
\def\GGGput[#1,#2](#3,#4)#5{%
  \setbox\GGGbox\vbox{\hbox{#5}\kern0pt}%
  \GGGlength\wd\GGGbox%
  \divide\GGGlength by100 \multiply\GGGlength by#1%
  \GGGheight\ht\GGGbox%
  \divide\GGGheight by100 \multiply\GGGheight by#2%
  \put(#3,#4){\kern-\GGGlength\raise-\GGGheight\box\GGGbox}}
  \newtheorem{algorithm}[theorem]{Algorithm}
\begin{document}

\title{A filtered Boris algorithm for charged-particle dynamics in a strong
magnetic field}

\titlerunning{A filtered Boris algorithm for charged-particle dynamics}

\author{Ernst Hairer$^1$, Christian Lubich$^2$, Bin~Wang$^3$}
\authorrunning{E.\ Hairer, Ch.\ Lubich, B.\ Wang}

\institute{$^1$~Dept.\ de Math{\'e}matiques, Univ.\ de Gen{\`e}ve,
CH-1211 Gen{\`e}ve 24, Switzerland,\\
\phantom{$^1$~}\email{Ernst.Hairer@unige.ch}\\
$^2$~Mathematisches Institut, Univ.\ T\"ubingen, D-72076 T\"ubingen, Germany,\\
\phantom{$^2$~}\email{Lubich@na.uni-tuebingen.de}\\
$^3$~School of Mathematical Sciences, Qufu Normal University, Qufu 273165, P.R.China, \\
\phantom{$^3$~}\email{wangbinmaths@qq.com}
}

\date{ }

\maketitle

\begin{abstract}
A modification of the standard Boris algorithm, called {\it filtered} Boris algorithm, is proposed for the numerical integration of the equations of motion of charged particles in a strong non-uniform magnetic field in the asymptotic scaling known as maximal ordering. With an appropriate choice of filters, second-order error bounds in the position and in the parallel velocity, and first-order error bounds in the normal velocity  are obtained with respect to the scaling parameter.  The proof compares the modulated Fourier expansions of the exact and the numerical solutions. Numerical experiments illustrate the error behaviour of the filtered Boris algorithm.\bigskip

\noindent
{\it Keywords.\,}
Charged particle, magnetic field, guiding center, Boris algorithm,  filter functions, exponential integrator,
modulated Fourier expansion.
\bigskip

\it\noindent
Mathematics Subject Classification (2010): \rm\,
65L05, 65P10, 78A35, 78M25
\end{abstract}

\section{Introduction}\label{sec:intro}

In this paper we propose and analyse a numerical integrator for the equations of motion of a charged particle in a strong inhomogeneous magnetic field,
\begin{equation}\label{ode}
\begin{aligned}
&\ddot x(t) =  \dot x(t) \times B(x(t),t) + E(x(t),t)
\\
&\text{with}\quad
B(x,t) = \frac 1 \eps \, B_0(\eps x  ) + B_1(x,t) \quad\text{for }\  0<\eps\ll 1 .
\end{aligned}
\end{equation}
This scaling is of interest in particle methods in plasma physics and is called {\it maximal ordering} in \cite{brizard07fon}; see also
\cite{possanner18gfv} for a careful discussion of scalings and a rigorous analysis of this model.
It is assumed that ${|B_0(0)|\ge 1}$, that
 $B_0$, $B_1$ and $E$ are  smooth functions that  are bounded independently of $\eps$ on bounded domains together with all their derivatives, and that the initial data $x(0)=x^0$, $\dot x(0)=v^0$ are bounded independently of $\eps$.

In \eqref{ode}, $x(t)\in\real^3$ represents the position  at time $t$ of a charged particle (of unit mass and charge) that moves in the magnetic field $B$ and the electric field~$E$. The motion is composed of fast rotation around a guiding center (with the Larmor radius proportional to $\eps$) and slow motion of the guiding center.

The standard integrator for charged particles in a magnetic field is the {\it Boris algorithm} \cite{boris70rps} (see also, e.g., \cite{hairer18ebo}), which in the two-step formulation with step size $h$ is given by
\begin{equation}\label{boris}
\frac{x^{n+1} - 2x^n + x^{n-1}}{h^2} = \frac {x^{n+1} - x^{n-1}}{2h}  \times B(x^n,t^n) + E(x^n,t^n)
\end{equation}
with the velocity approximation
$v^n = \frac1{2h}\bigl( x^{n+1} - x^{n-1}\bigr) $ at time $t^n=nh$. This algorithm does, however, not behave well for \eqref{ode} with small $\eps$. Here we propose a modification, which we name {\it filtered Boris algorithm}. This modified integrator allows us to obtain better accuracy with considerably larger time steps, at minor additional computational cost. It is still a symmetric algorithm. We formulate and discuss this new algorithm in Section~\ref{sec:filtered-boris}. It comes in different variants that depend on the choice of a suitable filter function and of the positions where the magnetic field is evaluated, and we identify favourable choices.

In Section~\ref{sec:main-num} we state the main theoretical result, Theorem~\ref{thm:main}, which gives an error bound that behaves favourably with respect to $\eps$. While most filters only yield a first-order error bound in the positions, for a particular, non-trivial choice of the filter a second-order error bound is obtained. A second-order error bound is also obtained for the component of the velocity that is parallel to the magnetic field. For the normal velocity approximation, there is only a first-order error bound for any filter.  The proof of Theorem~\ref{thm:main} is based on comparing the modulated Fourier expansions of the exact and the numerical solutions, which are derived in Sections~\ref{sec:mfe-exact} and~\ref{sec:mfe-num}, respectively. Combining those results, the proof of Theorem~\ref{thm:main} is finally completed in Section~\ref{sec:proof}.

We remark that the differential equations for the coefficient functions of the modulated Fourier expansions derived  explicitly up to $\bigo(\eps^2)$ in Section~\ref{sec:mfe-exact} also yield the motion of the guiding center up to $\bigo(\eps^2)$. They coincide up to $\bigo(\eps^2)$ with the guiding center equations of the numerical approximation given by the filtered Boris integrator for an appropriate filter and for non-resonant step sizes $h\le C\eps$ with a possibly large constant~$C$. This does not hold true for the standard Boris integrator.

In Section~\ref{sec:boris-2} we describe a related, but different  integrator, called two-point filtered Boris algorithm, which evaluates the magnetic field both in the current position and in the current guiding center approximation in each step, and which has similar convergence properties to the previously considered filtered Boris method.

In Section~\ref{sec:num} we present the results of numerical experiments in which we compare the standard and filtered Boris algorithms.

In the Appendix we show how the filters are evaluated efficiently using a Rodriguez-type formula.

\section{Filtered Boris algorithm}
\label{sec:filtered-boris}

Using the velocity approximation at the mid-point,
\[
v^{n-1/2} = \frac 1h \bigl(x^{n} - x^{n-1} \bigr) = v^n  -
\frac{h}2\,v^n \times B(x^n,t^n) - \frac h2 E(x^n,t^n),
\]
the Boris algorithm \eqref{boris} is usually written and implemented as a one-step
method $(x^n,v^{n-1/2}) \mapsto (x^{n+1},v^{n+1/2})$,
\begin{equation}\label{boris-onestep}
\begin{array}{rcl}
v^{n-1/2}_+ &=&  v^{n-1/2} + \frac h2\, E(x^n,t^n) \\[2mm]
v^{n+1/2}_- - v^{n-1/2}_+  &=& \frac{h}2\, \bigl(v^{n+1/2}_- + v^{n-1/2}_+\bigr) \times B(x^n,t^n)\\[2mm]
v^{n+1/2} &=&  v^{n+1/2}_- + \frac h2\, E(x^n,t^n) \\[2mm]
x^{n+1} &=& x^n + h\,  v^{n+1/2} .
\end{array}
\end{equation}
To capture the high oscillations in the velocity more accurately, the second line
of \eqref{boris-onestep} needs to be modified, and one should rather work with
averaged velocities $ v^{n+1/2} \approx \tfrac1h\int_{t^n}^{t^{n+1}} v(t)\, \d t$ and possibly averaged positions.
This can be
achieved with the help of  filter functions like in \cite{garcia-archilla99lmf,hochbruck99agm} and
\cite[Section~XIII.2]{hairer06gni}.

For a vector $B=(b_1,b_2,b_3)^\top\in \real^3$ we denote by $|B|$ the Euclidean norm of $B$ and we use the common notation
\begin{equation}\label{skew-hat}
v\times B = - \widehat B \,v,\qquad \widehat B = \begin{pmatrix}
0 & - b_3 & b_2\\
b_3&0&-b_1\\
-b_2&b_1 & 0
\end{pmatrix} .
\end{equation}
For real-analytic functions $\Psi (\zeta )$ (such as $\exp (\zeta )$) we
 will form matrix functions
$\Psi(-h\widehat B)$, which are efficiently computed by a Rodriguez-type formula as described in the Appendix.
%

We denote by 
\begin{equation}\label{x-gc}
x^n_\odot = x^n + \frac{v^n \times B^n}{|B^n|^2}
\end{equation}
with $B^n=B(x^n,t^n)$ the guiding center approximation at time $t^n$ (cf.~\cite{northrop63tam}).
For the argument of~$B$ in the algorithm we choose a point on the straight line connecting $x^n$ and $x^n_\odot$: 
\begin{equation}\label{x-bar}
\bar x^n  = \theta^n x^n + (1-\theta^n) x^n_\odot
\end{equation}
with  $\theta^n=\theta(h |B^n|)$
for a  real function $\theta$. It turns out that there is a unique choice of $\theta$ such that a second-order error bound will be obtained:
\begin{equation}\label{theta}
\theta(\xi) = \frac1{\sinc(\xi/2)^2},
\end{equation}
where $\sinc(\xi)=\sin(\xi)/\xi$. 
We note that with the scaling \eqref{ode}, we have $\bar x^n = x^n +\bigo(\eps)$, provided that $h|B^n|$ is bounded away from non-zero integral multiples of $2\pi$.

We consider the following modification of the Boris algorithm.

\begin{algorithm}[Filtered Boris algorithm]
\label{alg:boris}
Given $(x^n , v^{n-1/2})$, the algorithm computes $(x^{n+1} , v^{n+1/2})$
as follows, with $B^n=B(x^n,t^n)$, $\bar B^n=B(\bar x^n,t^n)$ with $\bar x^n$ defined by \eqref{x-bar}, and $E^n=E(x^n,t^n)$:
\begin{equation}\label{filtered-boris-onestep}
\begin{array}{rcl}
v^{n-1/2}_+ &=&  v^{n-1/2} + \frac h2\, \Psi(h\widehat {B^n})\,E^n \\[2mm]
$$
v^{n+1/2}_- &=& \exp\bigl(-h\widehat {\bar B^n}\bigr) v^{n-1/2}_+.
$$
\\[2mm]
v^{n+1/2} &=&  v^{n+1/2}_- + \frac h2\, \Psi(h\widehat {B^n})\,E^n 
\\[2mm]
x^{n+1} &=& x^n + h\,  v^{n+1/2} ,
\end{array}
\end{equation}
where $\,\Psi(\zeta)=\mathrm{tanch}(\zeta/2)\,$ with $\,\mathrm{tanch}(\zeta)=\tanh(\zeta)/\zeta$.

The velocity approximation $v^n$ is computed as
\begin{equation}\label{boris-v}
v^n = \Phi_1(h\widehat {\bar B^n}) \,\frac{x^{n+1}-x^{n-1}}{2h} - h\Upsilon(h\widehat {B^n}) E^n ,
\end{equation}
where $~\displaystyle\Phi_1 (\zeta ) = \frac 1{\sinch (\zeta )}~$ with 
$~\displaystyle\sinch (\zeta ) = \frac{ \sinh (\zeta )}{\zeta}$, and
$~\displaystyle\Upsilon(\zeta ) =\frac{ \Phi_1 (\zeta ) -1 }{\zeta} $.
The starting approximation $v^{1/2}$ is computed from \eqref{boris-start} below with $n=0$.
\end{algorithm}

For the choice $\theta^n = 1$, the algorithm is explicit and requires only matrix-vector
multiplications that can be done efficiently with a Rodriguez-type formula (see the Appendix).

For $\theta^n = \theta (h|B^n|)$ with $\theta (\zeta ) $ from \eqref{theta},
the algorithm is implicit, because $\bar x^n$ then depends on $v^n$  and appears in the argument of $\bar B^n$ in the second line. This can be solved by a rapidly convergent fixed-point iteration for $\bar x^n$,
with  an error reduction by a factor $\bigo(\eps^2)$ in each iteration. 
We start the iteration
with $\bar x^n = x^n$, then compute $v_-^{n+1/2}$ from \eqref{filtered-boris-onestep}
and $v^n$ from \eqref{boris-v} using
\begin{equation}\label{v-pm}
\tfrac1{2h}(x^{n+1} - x^{n-1}) = \tfrac12\bigl(v^{n+1/2} + v^{n-1/2}\bigr) =
\tfrac12\bigl(v^{n+1/2}_- + v^{n-1/2}_+\bigr).
\end{equation}
This then yields $x^n_\odot$ from \eqref{x-gc} and the new $\bar x^n$ from \eqref{x-bar}.
In practice, one iteration is sufficient to get the improved accuracy. We note that all
matrix-vector multiplications can be done with a Rodriguez-type formula.

We mention that  Algorithm~\ref{alg:boris} preserves volume in phase space exactly
in the case of constant $B$ (and time-dependent $B(t)$),
but only approximately up to $\bigo(h\eps)$ in the general case of an inhomogeneous magnetic field \eqref{ode}.

\medskip
\noindent{\bf Two-step formulation.}
The filtered Boris algorithm has the symmetric two-step formulation
\begin{equation}\label{boris-twostep}
 \frac{x^{n+1} - 2x^n + x^{n-1}}{h^2}
  = 
\frac 2h\mathrm{tanh}\bigl( -\tfrac12 h\widehat {\bar B^n}\bigr) \,\frac{x^{n+1} - x^{n-1}}{2h}  +   \Psi( h  \widehat {B^n} ) E^n ,
\end{equation}
as is readily obtained by taking two consecutive steps and using \eqref{v-pm}.
This formulation is the basis of our theoretical analysis.

\medskip
\noindent{\bf Starting value.}
The starting value $v^{1/2}$ is chosen such that formulas
\eqref{filtered-boris-onestep}-\eqref{boris-v}
also hold for $n=0$. We find, for arbitrary $n$, that
\begin{equation}\label{boris-start}
 v^{n\pm 1/2} = \varphi_1 \bigl(\mp h \widehat {\bar B^n} \bigr) 
 \Bigl( v^n + h\Upsilon(h\widehat {B^n}) E^n\Bigr)
 \pm \frac h2  \Psi(h\widehat {B^n})\,E^n  ,
\end{equation}
where $\varphi_1(\zeta)=(e^\zeta-1)/\zeta$. Note that, for given $x^0$ and $v^0$, the
vectors $x^n_\odot$ and $\bar x^n$ are known, so that \eqref{boris-v} provides an explicit
 formula for $v^{1/2}$.

 
\medskip
\noindent{\bf One-step map ${(x^n,v^n)\mapsto (x^{n+1},v^{n+1})}$.}
Using the last formula of \eqref{filtered-boris-onestep} together with \eqref{boris-start} for relating $x^{n+1}$ and $x^n$,
and \eqref{boris-start} with $n$ and $+$ and with $n+1$ and $-$ for relating $v^{n+1}$ and $v^n$,
the filtered Boris algorithm can be written as
\begin{equation}
\label{expint-n}
\begin{array}{rcl}
x^{n+1} &=& \displaystyle
x^n + h \Phi^n_+ v^n
 +  \tfrac  {h^2}2 \,\Psi^n_+  E^n
 \\[3mm]
\Phi^{n+1}_- v^{n+1} &=& \Phi^{n}_+ v^{n} + \tfrac h2 \,\Psi^{n+1}_-  E^{n+1}  +  \tfrac h2 \,\Psi^n_+  E^n  ,
\end{array}
\end{equation}
where $\Phi^n_\pm = \varphi_1(\mp h\widehat {\bar B^n})$ and 
$\Psi^n_\pm =  \Psi (h\widehat {B^n} )\pm 2\Phi^n_\pm \Upsilon(h\widehat {B^n})$.

The method is symmetric in the sense that exchanging $n\leftrightarrow n+1$ and $h\leftrightarrow -h$
gives the same formulas.

\medskip
\noindent{\bf The integrator in the case of a constant magnetic field.}
For constant $B$,  we note that $(\Phi^{n+1}_-)^{-1} \Phi^{n}_+ = \exp(-h\widehat B)$, and so
\eqref{expint-n} reduces to the exponential integrator
(with the notation $\Psi_\pm(\zeta)=\Psi(\zeta) \mp 2\varphi_1(\pm\zeta)\Upsilon(\zeta)$)
\begin{equation}
\label{expint}
\begin{array}{rcl}
x^{n+1} &=& x^n + h \varphi_1 (-h\widehat B) v^n + \frac {h^2}2  \Psi_+ (-h\widehat B)
 E^n \\[1mm]
v^{n+1} &=& \exp (-h\widehat B) v^n + \frac h2 \bigl(\Psi_0 (-h\widehat B) E^n
+ \Psi_1 (-h\widehat B) E^{n+1} \bigr)
\end{array}
\end{equation}
with $\Psi_0(\zeta)=\Psi_+(\zeta)/\varphi_1(-\zeta)$ and $\Psi_1(\zeta)=\Psi_-(\zeta)/\varphi_1(-\zeta)$.
The method is exact for a constant magnetic field $B$ and vanishing
electric field $E$, because
\begin{equation} \label{exp-B}
\exp\left(
          \begin{array}{cc}
            0 & tI \\
            0 & -t \widehat B \\
          \end{array}
        \right) = \left(
                                                       \begin{array}{cc}
                                                             I & \ \ t\,\varphi_1(-t \widehat B) \\
                                                             0 & \ \exp(- t \widehat B) \\
                                                           \end{array}
                                                         \right).
\end{equation}
Since we have chosen $\Psi (\zeta ) =\mathrm{tanch}(\zeta/2)$,
the method is also {\it exact for constant $B$ and~$E$}. This is seen as follows:
For constant $B$, the
variation-of-constants formula for the system $\dot x=v, \ \ \dot v= x \times B + E(x)$ reads, in view of \eqref{exp-B},
$$
\begin{array}[c]{ll}x(t_n+h)=x(t_n)+ h\varphi_1(-h \widehat B)
v(t_n)
\\[1mm]
\qquad\qquad \qquad +\ h^2 \int_{0}^1(1- s )
\varphi_1(-(1- s )h \widehat B) E(x(t_n+h s ))  \d s ,\\[1mm]
v(t_n+h)=\exp(-h \widehat B)  v(t_n)+h \int_{0}^1
 \exp(-(1- s )h \widehat B) E(x(t_n+h s ))  \d s .
\end{array}
$$
For constant $E$, this becomes \eqref{expint},
which yields $\Psi_\pm(\zeta)=\varphi_2(\pm\zeta)$, where
$\varphi_2(\zeta)=(e^\zeta -1 - \zeta)/(\zeta^2/2) = \int_0^1 (1- s ) \,\varphi_1((1- s )\zeta)  \d s $.

\section{Statement of the main result}
\label{sec:main-num}

Our main theoretical result in this paper is the following error bound for the filtered Boris algorithm. Here we denote, for the exact velocity
$v(t)=\dot x(t)$,
$$
v_\parallel(t) = \frac{B(x(t),t)}{|B(x(t),t)|}\, \biggl(  \frac{B(x(t),t)}{|B(x(t),t)|}\cdot v(t) \biggr), \qquad v_\perp(t)=v(t)-v_\parallel(t),
$$
and similarly for the numerical velocity $v^n$,
$$
v_\parallel^n = \frac{B(x^n,t)}{|B(x^n,t^n)|}\, \biggl(  \frac{B(x^n,t^n)}{|B(x^n,t^n)|}\cdot v^n \biggr), \qquad v_\perp^n=v^n-v_\parallel^n.
$$
We then have the following result.

\begin{theorem}\label{thm:main} We assume the following, with arbitrarily chosen positive constants $c$, $C$, $M$ and $T$:
\begin{enumerate}
\item The initial velocity satisfies an $\eps$-independent bound
\begin{equation}\label{v-bound}
 |v^0| \le M.
\end{equation}
\item
The exact solution $x(t)$ of \eqref{ode} stays in a bounded set $K$ (independent of~$\eps$) for $0\le t \le T$.
\item
The step size satisfies $h \le C \eps$ and is such that the following non-resonance condition is satisfied:
\begin{equation}\label{non-res-exact}
\big|\sinc\bigl(\tfrac12 kh |B(x(t),t)|\bigr)\big| \ge c >0 \qquad\text{for } k=1,2,3.
\end{equation}
\end{enumerate}
If in the filtered Boris algorithm,
\begin{itemize}
\item $\bar x^n$ is given by \eqref{x-bar} with the function $\theta$ of \eqref{theta}, and
\item the filter functions $\Psi$ and $\Upsilon$ are defined as in Algorithm~\ref{alg:boris},
\end{itemize}
then the errors in the positions and the velocities are bounded by
\begin{equation}\label{err-2}
\begin{aligned}
x^n - x(t^n) &=\bigo(\eps^2), \\[0.5mm]
v_\parallel^n - v_\parallel(t^n) &=\bigo(\eps^2), \qquad v_\perp^n - v_\perp(t^n) =\bigo(\eps).
\end{aligned}
\end{equation}
For  a different choice of the functions $\theta$, $\Psi$ and $\Upsilon$, the error bounds are not better than $\bigo(\eps)$ for general problems~\eqref{ode}. 
The constants in the $\bigo$-notation are independent of $\eps$ and $h$
and $n$ with $0\le t^n=nh\le T$, but depend on~$T$, on the velocity
bound $M$ and the constants $c$ and $C$, and on bounds of
derivatives of $B_0$, $B_1$ and $E$ in a neighbourhood of the set
$K$.
\end{theorem}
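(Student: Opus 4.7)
The plan is to compare, term by term, the modulated Fourier expansions (MFEs) of the exact and numerical solutions developed in Sections~\ref{sec:mfe-exact} and~\ref{sec:mfe-num}. Both are of the form
$$x(t) \approx y(t) + \sum_{k\ne 0} \mathrm{e}^{ik\phi(t)/\eps}\, z^k(t),$$
with a phase $\phi$ whose derivative is essentially $|B_0|$ evaluated along the guiding center, and with coefficient functions $y,\,z^k$ satisfying modulation equations obtained by substituting the ansatz into \eqref{ode} (for the exact solution) or into the two-step form \eqref{boris-twostep} (for the numerical solution) and separating by frequencies. In either expansion, $y$ and $z^0$ are $\bigo(1)$, $z^{\pm 1} = \bigo(\eps)$, and higher harmonics satisfy $|z^k| = \bigo(\eps^{|k|})$, so only a handful of harmonics matter to the order we seek.

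The central step is to match the numerical and exact modulation systems to $\bigo(\eps^2)$. In the numerical system, the filter $\Psi$ and the coefficient $\Upsilon$ appear through their symbols $\Psi(ikh|B|)$ and $\Upsilon(ikh|B|)$ on harmonic $k$, while the shift $\theta$ controls the evaluation point $\bar x^n = x^n + (1-\theta^n)(x^n_\odot - x^n)$, which enters through $\bar B^n$ and the operator $\tfrac{2}{h}\tanh(-\tfrac12 h\widehat{\bar B^n})$. Expanding these symbols in $\eps$ and using $x^n_\odot - x^n = \bigo(\eps)$, I would show that the $\bigo(\eps)$ modulation equations for $y$ and $z^{\pm 1}$ automatically coincide with the exact ones, while at the next order there is exactly one choice of the scalar functions $\theta$, $\Psi$, $\Upsilon$ that makes the systems agree. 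The condition on $\theta$ stems from a trigonometric factor $\sinc(h|B|/2)^2$ arising when $B$ is Taylor expanded at $\bar x^n$ instead of at the guiding center, which must be cancelled — forcing $\theta(\xi) = 1/\sinc(\xi/2)^2$. The matching of the $E^n$ contributions similarly pins $\Psi(\zeta) = \tanch(\zeta/2)$ and $\Upsilon(\zeta) = (\Phi_1(\zeta)-1)/\zeta$. Any other choice leaves at least one $\bigo(\eps^2)$ term in the $y$- or $z^{\pm 1}$-equation unmatched, which accounts for the last sentence of the theorem.

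With the matching in place, the proof is completed by a defect-and-stability argument. Inserting the truncated MFEs into the scheme \eqref{boris-twostep} produces a defect that, thanks to the matched coefficients, is small enough in every frequency channel; the non-resonance condition \eqref{non-res-exact} bounds the inverse of the discrete symbol $\sinc(kh|B|/2)$ for $k=1,2,3$, and the $\bigo(\eps^{|k|})$ decay of $z^k$ handles $|k|\ge 4$. A discrete Gronwall argument applied to the difference between $x^n$ and the truncated MFE then yields $x^n - x(t^n) = \bigo(\eps^2)$. The velocity split is then read off from the MFE: the parallel component is governed by $\dot y$ plus higher-harmonic contributions and inherits the $\bigo(\eps^2)$ accuracy of $y$; the perpendicular component is dominated by $(ik\dot\phi/\eps)\,\mathrm{e}^{ik\phi/\eps}z^k$ with $k=\pm 1$, so that an $\bigo(\eps^2)$ error in $z^{\pm 1}$ propagates through the $1/\eps$ factor and gives $v_\perp^n - v_\perp(t^n) = \bigo(\eps)$.

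The main obstacle is the $\bigo(\eps^2)$ bookkeeping in the matching step: one has to identify precisely which terms in the expansions of $\bar B^n$, of $\tfrac{2}{h}\tanh(-\tfrac12 h\widehat{\bar B^n})$ and of $\Psi(h\widehat{B^n})$ contribute at order $\eps^2$ to the equations for $y$ and $z^{\pm 1}$, and to verify that the stated filters are the unique ones cancelling them. Subtleties include the implicit definition of $\bar x^n$ (handled by a contractive fixed-point argument, as already noted in Section~\ref{sec:filtered-boris}), the boundedness of the MFE coefficients on $[0,T]$ (which uses hypothesis~(ii)), and the use of $h\le C\eps$ to keep the Taylor expansions of the symbols controlled. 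The non-resonance condition is exactly what is needed to keep the relevant discrete operators uniformly invertible as $\eps\to 0$.
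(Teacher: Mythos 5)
Your proposal follows essentially the same route as the paper: Theorems~\ref{thm:mfe-exact} and~\ref{thm:mfe-num} are compared to show that the modulation functions and the phase agree to $\bigo(\eps^2)$ precisely for $\theta(\xi)=1/\sinc(\xi/2)^2$ and $\Psi(\zeta)=\tanch(\zeta/2)$, the remainder is controlled by a defect/discrete-Gronwall argument under the non-resonance condition, and the $\bigo(\eps)$ versus $\bigo(\eps^2)$ split of the velocity error comes from the $\dot\phi/\eps$ factor multiplying $z^{\pm1}_{\pm1}$ in \eqref{v-mfe}. The only step you pass over lightly is the verification that the numerical velocity \eqref{boris-v} actually admits the same expansion \eqref{v-mfe} as $\dot x$, which in the paper requires the $\Phi_1$-filter to undo the $\sinc(\eta\dot\phi)$ factor of the centred difference and the cancellation $P_{\pm1}(\widehat B\,\dot z^0-E)=\bigo(\eps^2)$ coming from \eqref{eq-zpm0-num}.
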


We remark that in view of the error bounds, the non-resonance condition might be required along the numerical solution $x^n$ instead of the exact solution $x(t)$ as in \eqref{non-res-exact}.

The proof of this theorem will compare the modulated Fourier expansion of the exact solution (as given in Section~\ref{sec:mfe-exact}) with that of the numerical approximation (as given in Section~\ref{sec:mfe-num}). It will be given in Section~\ref{sec:proof}.

\begin{remark} The proof also shows that the choice $\bar x^n = x^n$ is sufficient for order~2 if the magnetic field satisfies, for all $z\in\complex^3$ and $x\in K$ and all times~$t$,
$$
\Im (z \times \partial_x B(x,t) \bar z) \cdot B(x,t) = \bigo(\eps).
$$
\end{remark}

\section{Modulated Fourier expansion of the exact solution}
\label{sec:mfe-exact}

We write the solution of \eqref{ode} as a {\it modulated Fourier expansion}
\begin{equation}\label{mfe-formal-exact}
x(t) \approx \sum_{k\in\integer} z^k (t) \,\e^{\iu k\phi (t)/\eps}
\end{equation}
with coefficient functions $z^k(t)$ for which all time derivatives are bounded independently of $\eps$,
where $\dot \phi (t) /\eps = \big| B\bigl(z^0(t),t\bigr)\big|$, and $z^0(t)$ describes the
motion of the guiding center.
Such a formal expansion has first been considered in \cite{kruskal58tgo}
for proving the existence of an adiabatic invariant (essentially the
magnetic moment $\tfrac12|\dot x \times B(x)|^2/|B(x)|^3$). It has been used for a rigorous proof of the long-time near-conservation of the magnetic moment in \cite{hairer19lta}, where this approach was extended to the numerical
solution of a variational integrator, for which near-conservation of the magnetic moment and of
the energy is rigorously proved over long times that cover arbitrary negative powers of $\eps$.

Following \cite{hairer19lta}, we diagonalize the linear
map $v\mapsto v\times B(x,t)$, which has eigenvalues
$\lambda_1 = \iu |B(x,t)|$, $\lambda_0 =0$, and $\lambda_{-1}=-\iu |B(x,t)|$.
We denote the normalized eigenvectors by $v_1(x,t), v_0(x,t),v_{-1}(x,t)$,
and remark that $v_0(x,t)$ is collinear to $B(x,t)$. We let
$P_j(x,t)=v_j(x,t)v_j(x,t)^*$ be the orthogonal projections onto the
eigenspaces. Furthermore, we write the coefficient functions of
\eqref{mfe-formal-exact} in the time-dependent basis $v_j\bigl(z^0(t),t\bigr)$,
\begin{equation}\label{zetak}
 z^k = z_1^k + z_0^k + z_{-1}^k ,\qquad z_j^k(t) =P_j\bigl(z^0(t),t\bigr)z^k(t) .
\end{equation}
Since $x(t)$ is real, we assume $z^{-k} = \overline{z^k}$ for all $k$.
Together with the fact that $v_{-1}(x,t) = \overline {v_1(x,t)}$ and
$v_0(x,t)$ is real, it follows
\begin{equation}\label{zetak-relation}
z_{-1}^{-k} = \overline{z_1^k},\qquad
z_0^{-k} = \overline{z_0^k},\qquad
z_{1}^{-k} = \overline{z_{-1}^k} .
\end{equation}
The following result is a variant of Theorem 4.1 in \cite{hairer19lta}, adapted to the present case of a strong magnetic field of the form  \eqref{ode}. Note that $B$ in this paper corresponds to $B/\eps$ in \cite{hairer19lta}.

\begin{theorem} \label{thm:mfe-exact}
Let $x(t)$ be a solution of \eqref{ode} with bounded initial velocity
\eqref{v-bound} that stays in a compact set $K$ for $0\le t \le T$.
For an arbitrary truncation index $N\ge 1$ we then have
\begin{equation}\label{mfe-remainder}
x(t) = \!\!\sum_{|k|\le N}\!\! z^k (t)\, \e^{\iu k \phi (t)/\eps}  + R_N (t) ,
\end{equation}
where the phase function satisfies $\dot \phi (t) = \eps | B (z^0(t),t ) |=\bigo(1)$.

(a) The coefficient functions $z^k(t)$ together with their derivatives (up to order
$N$) are bounded as $z_j^0 = \bigo (1)$ for $j\in \{-1,0,1\}$, $z_1^1 = \bigo (\eps )$,
$z_{-1}^{-1} = \bigo (\eps )$,
$z_j^k = \bigo (\eps^3)$ for $|k|=1$, $j\ne k$,
and for the remaining $(j,k)$ with $ |k| \le N$,
\begin{equation}\label{zk-est}
z_j^k = \bigo (\eps^{|k|+1}).
\end{equation}
They are unique up to $\bigo (\eps^{N+2})$. Moreover, we have
$\dot z^0 \times B(z^0,t) = \bigo (1 )$.

(b) The remainder term and its derivative are bounded by
\begin{equation}\label{remainder-est}
R_N(t) = \bigo (t^2 \eps^N), \quad \dot R_N(t) = \bigo (t \eps^N)
\quad\hbox{for}\quad 0\le t\le T.
\end{equation}

(c) The functions $z_0^0, z_{\pm 1}^0, z_1^1,z_{-1}^{-1}$ satisfy the
differential equations
\begin{align}
\ddot z_0^0 &=  P_0(z^0,t) E(z^0,t) + 2\,
P_0(z^0,t) \, \Re\! \Bigl( \iu \frac{\dot\phi}\eps z_1^1 \times B'(z^0,t)z_{-1}^{-1}\Bigr)\nonumber \\
& \qquad +  2\, \dot P_0(z^0,t) \dot z^0 + \ddot P_0(z^0,t) z^0 + \bigo (\eps^2 ) , \label{eq-z00}\\
\dot z_{\pm 1}^0 & =   \dot P_{\pm 1}(z^0,t)  z^0 \pm \iu \frac{\eps}{\dot\phi}
P_{\pm 1}(z^0,t) E(z^0,t) + \bigo (\eps^2), \label{eq-zpm0}\\[-1mm]
 \dot z_{\pm 1}^{\pm 1} &= - \frac{\ddot \phi}{\dot\phi}z_{\pm 1}^{\pm 1} + \bigo(\eps^2)=\bigo(\eps^2), \label{eq-zpmpm}
\end{align}
where we use the notation $\dot P_j(z^0,t) = \frac{\d}{\d t} P_j\bigl(z^0(t),t\bigr)$
and similar for $\ddot P_j(z^0,t)$.
All other coefficient functions $z_j^k$ are given by algebraic expressions depending
on $z^0, \dot z_0^0,z_1^1,z_{-1}^{-1}$.

(d) Assuming $\phi (0)=0$,
initial values for the differential equations of item~(c) are given by
\begin{align*}
z^0(0) &= x(0) + \frac{\dot x(0) \times B(x(0),0)}{|B(x(0),0)|^2} + \bigo(\eps^2),
\\
 \dot z_0^0(0) &=  P_0(x(0),0) \dot x(0)+ \dot P_0(x(0),0)  x(0)+ \bigo (\eps^2 ) , \\
  z_{\pm 1}^{\pm 1} (0) &= \mp \,\iu \frac{\eps}{\dot\phi (0)}P_{\pm 1}(x(0),0)\dot x(0) + \bigo(\eps^2).
\end{align*}
The constants symbolised by the \mbox{$\bigo$-notation} are independent
of $\eps$ and $t$ with $0\le t \le T$, but they depend on~$N$,
on the velocity bound $M$ in \eqref{v-bound}, on bounds of derivatives
of~$B$ and~$E$, and on~$T$.
\end{theorem}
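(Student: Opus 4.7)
The plan is to insert the formal ansatz \eqref{mfe-formal-exact} into \eqref{ode}, Taylor-expand $B(x,t)$ and $E(x,t)$ around $x=z^0(t)$, and match coefficients of $\e^{\iu k\phi(t)/\eps}$. Using $\dot x = \sum_k (\dot z^k + \iu k \dot\phi/\eps\, z^k) \e^{\iu k\phi/\eps}$ and the analogous expression for $\ddot x$, together with $\dot\phi = \eps|B(z^0,t)|$, the leading $\bigo(\eps^{-2})$ balance at frequency $k$, projected onto the eigenbasis $v_j(z^0,t)$, takes the form
\begin{equation*}
k(k-j)\,|B(z^0,t)|^2\, z^k_j = \text{(lower-order terms)}.
\end{equation*}
The prefactor vanishes precisely on the five resonant slots $(k,j)\in\{(0,-1),(0,0),(0,1),(1,1),(-1,-1)\}$; on every other slot the relation is algebraic and $z^k_j$ is expressed in closed form as a lower-order quantity divided by $k(k-j)|B|^2\sim\eps^{-2}$, which gains two powers of $\eps$ per inversion. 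The hypothesis $|B_0(0)|\ge 1$ ensures that these inversion constants are uniform in $\eps$.

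Starting the iteration from the seed $z^0_j=\bigo(1)$, $z^{\pm 1}_{\pm 1}=\bigo(\eps)$, $z^k_j=0$ otherwise and re-inserting, each pass improves the ansatz by one power of $\eps$; bookkeeping through enough passes yields the size bounds \eqref{zk-est} together with the sharper bounds in (a). On the five resonant slots the algebraic relation becomes a differential equation by keeping the $\bigo(\eps^{-1})$ contributions from $\dot z^k$ in $\ddot x$ and from $\dot z^k\times B$ in $\dot x\times B$. Applying $P_0(z^0,t)$ to the $k=0$ balance, taking the two time derivatives through the projector so as to produce the $2\dot P_0\dot z^0 + \ddot P_0 z^0$ terms, and extracting from the Taylor expansion of $B$ the quadratic cross-coupling between frequencies $+1$ and $-1$, yields \eqref{eq-z00}; equation \eqref{eq-zpm0} follows by differentiating the defining identity $z^0_{\pm 1}=P_{\pm 1}(z^0,t) z^0$ and substituting $\dot z^0\times B=\bigo(1)$; \eqref{eq-zpmpm} is the $\bigo(\eps)$ correction at $(k,j)=(\pm 1,\pm 1)$, where the $\ddot\phi/\dot\phi$ term is the contribution of $\iu k\ddot\phi/\eps$ coming from the second derivative of $\e^{\iu k\phi/\eps}$. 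The initial values in (d) come from matching $x(0)=\sum_k z^k(0)$ and $\dot x(0)=\sum_k(\dot z^k(0)+\iu k\dot\phi(0)/\eps\, z^k(0))$ and projecting onto $v_j(x(0),0)$; the term $\dot x(0)\times B/|B|^2$ in $z^0(0)$ is exactly $-(z^1_1(0)+z^{-1}_{-1}(0))$ and identifies $z^0(0)$ as the guiding centre up to $\bigo(\eps^2)$.

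For the remainder bound (b), substitute the truncated sum $x_N(t)=\sum_{|k|\le N}z^k(t)\e^{\iu k\phi(t)/\eps}$ back into \eqref{ode}; by construction of the hierarchy the defect is $\bigo(\eps^N)$ uniformly on $[0,T]$. The difference $R_N=x-x_N$ then satisfies the variational equation around $x_N$ driven by this defect with vanishing initial data (the initial conditions for the ODEs of (c) being adjusted accordingly), and a Gronwall estimate on the modified energy used in \cite{hairer19lta}, tailored to absorb the $\bigo(\eps^{-1})$ rotation $\widehat B$, yields $R_N(t)=\bigo(t^2\eps^N)$ and $\dot R_N(t)=\bigo(t\eps^N)$. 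Uniqueness up to $\bigo(\eps^{N+2})$ follows by applying the same estimate to the difference of two candidate expansions, whose hierarchy has zero source. The main technical obstacle is the bookkeeping on the five resonant slots: one must carry enough terms of the Taylor expansions of $B$ and $E$ and of the algebraic inversions on the non-resonant slots to capture exactly the $\bigo(\eps^2)$ corrections in \eqref{eq-z00}--\eqref{eq-zpmpm}, in particular the quadratic coupling $z^1_1\times\partial_x B(z^0,t) z^{-1}_{-1}$, without dropping any contribution of the same size; this is where the present theorem deviates in a controlled way from Theorem~4.1 of \cite{hairer19lta}.
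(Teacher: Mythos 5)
Your proposal is correct and follows essentially the same route as the paper's proof: insert the ansatz \eqref{mfe-formal-exact}, identify the five resonant slots $(k,j)$ where the leading factor $k(k-j)|B|^2$ vanishes, solve algebraically (gaining $\eps^2$ per inversion) on the non-resonant slots and project with $P_j(z^0,t)$ on the resonant ones to obtain \eqref{eq-z00}--\eqref{eq-zpmpm}, match initial data by fixed-point iteration, and bound the remainder by a defect-plus-Gronwall argument in which the $\bigo(\eps^{-1})$ rotation is absorbed via the decomposition $B=\eps^{-1}B_0(0)+\eps^{-1}(B_0(\eps x)-B_0(0))+B_1$ and variation of constants. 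The only point worth making more explicit is that the improved bounds $z_j^k=\bigo(\eps^{|k|+1})$ rest on the derivatives of $B$ being $\bigo(1)$ under the maximal-ordering scaling, which is exactly where the statement sharpens Theorem~4.1 of the cited reference.
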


\begin{remark}
We note that the guiding center motion of the system \eqref{ode} is given by the
non-oscillating term $z^0(t)$ in the modulated Fourier expansion. By the uniqueness of the modulated Fourier expansion up to high powers of~$\eps$, the equations in item (d) hold not only at time 0, but for all $t\le T$.
\end{remark}

\begin{proof}
(a) and (b): Compared to Theorem 4.1 in \cite{hairer19lta}, where a more general
strong magnetic field is considered,
the time interval of validity of the modulated Fourier expansion is here $\bigo(1)$ instead
of just $\bigo(\eps)$, and the bound \eqref{zk-est} is improved by a factor $\eps$.
The improvement of the time scale comes about by observing that a
function $ x_*(t)$ that solves \eqref{ode} up to a defect $d(t)$, i.e.,
$$
\ddot x_*(t) =  \dot x_*(t) \times B(x_*(t),t) + E(x_*(t),t) +d(t),
$$
satisfies an error bound, for $0\le t \le T$,
$$
|x_*(t) - x(t) | \le C \Bigl( |x_*(0)-x(0)| +  |\dot x_*(0)-\dot x(0)| + \int_0^t |d(t)| \, \d t\Bigr),
$$
where $C$ is independent of $\eps$ but grows exponentially with $T$.
This is proved by decomposing
$B(x,t)= \eps^{-1} B_0(0) + \eps^{-1}(B_0(\eps x)-B_0(0)) + B_1(x,t)$ and using
the variation-of-constants formula and the Gronwall inequality.
The improvement of the bound \eqref{zk-est} is a consequence of the fact that
the derivatives of $B(x,t)$ are bounded independently of $\eps$.

(c): For the error bound of Section~\ref{sec:proof} we need precise formulas for
the dominant terms of \eqref{mfe-remainder}.
Inserting the expansion \eqref{mfe-formal-exact} into the differential
equation \eqref{ode} and comparing the coefficients of $\e^{\iu k \phi (t)/\eps}$ yields
\begin{equation}\label{zetak-ode}
\ddot z^k + 2\iu k \frac{\dot\phi}\eps  \dot z^k+ \Bigl( \iu k \frac{\ddot\phi}\eps -
k^2 \frac{\dot\phi^2}{\eps^2}\Bigr) z^k = F^k,
\end{equation}
where, using Taylor series expansion for the nonlinearities,
\[
F^k =
\!\! \sum_{k_1+k_2 = k} \!\!\Bigl( \dot z^{k_1} + \iu k_1  \frac{\dot \phi}{\eps} z^{k_1}\Bigr)
 \times\!\!\! \sum_{m\ge 0 \atop s(\alpha ) = k_2} \!\!\!\!
 \frac {1}{m!} B^{(m)} (z^0,t) \,\bfz^\alpha
+ \!\!\sum_{m\ge 0 \atop s(\alpha ) = k}\!\!
\frac {1}{m!} E^{(m)} (z^0,t) \,\bfz^\alpha .
\]
Here, $B^{(m)}(x,t)$ and $E^{(m)}(x,t)$ denote the $m$th derivative with respect to $x$,
$\alpha =(\alpha_1,\ldots ,\alpha_m)$ is a multi-index with $\alpha_j \in \integer\setminus \{0\}$,
$s(\alpha ) = \alpha_1 + \ldots + \alpha_m$,
$|\alpha | = |\alpha_1| + \ldots + |\alpha_m|$, and $\bfz^\alpha = (z^{\alpha_1},
\ldots , z^{\alpha_m})$.

From \eqref{zetak-ode} it follows that the motion of the guiding center $z^0(t)$ is given by
\begin{equation}\label{zeta0-ode}
\ddot z^0 =  \dot z^0 \times B(z^0,t) + E(z^0,t) +
 2 \,\Re \Bigl( \iu \frac{\dot\phi}\eps z^1 \times B'(z^0,t)z^{-1}\Bigr)
+ \bigo (\eps^2 ) .
\end{equation}
The solution $z^0(t)$
is influenced by the functions $z^{\pm 1}$ which,  by \eqref{zetak-ode},
satisfy
\begin{equation}\label{zetapm1-ode}
\pm 2\iu  \frac{\dot\phi}\eps  \dot z^{\pm 1}+ \Bigl( \pm \iu \frac{\ddot\phi}\eps -
\frac{\dot\phi^2}{\eps^2}\Bigr) z^{\pm 1} =
\Bigl(\dot z^{\pm 1} \pm \iu \frac{\dot \phi}\eps
z^{\pm 1} \Bigr)  \times B(z^0,t)  + \bigo (\eps ) .
\end{equation}
Note that, whereas $B(z^0,t)$ is of size $\bigo (\eps^{-1})$,
its derivatives are bounded independently of $\eps$
due to the special form \eqref{ode}.

To get solutions with derivatives bounded uniformly in $\eps$, one has to
extract the dominant terms. Multiplying \eqref{zeta0-ode} with $P_0(z^0,t)$
eliminates the $\eps^{-1}$-term that is present in $B(z^0,t)$, and the second derivative
$\ddot z_0^0$ becomes dominant. Differentiating the relation $z_0^0 = P_0(z^0,t) z^0$
with respect to time yields
$\ddot z_0^0 = P_0(z^0,t) \ddot z^0 + 2\dot P_0(z^0,t) \dot z^0 + \ddot P_0(z^0,t) z^0$. This then gives \eqref{eq-z00}.
Note that, due to the special form of $B(x,t)$,
the time derivatives of $P_j(z^0,t)$ are of size $\bigo (\eps )$.

 A multiplication of
\eqref{zeta0-ode} with $P_{\pm 1}(z^0,t)$ gives
\[
P_{\pm 1}(z^0,t) \ddot z^0 = \pm \iu \frac {\dot\phi}\eps P_{\pm 1}(z^0,t) \dot z^0 +
P_{\pm 1}(z^0,t)E(z^0,t ) + \bigo (\eps ) .
\]
Substituting $P_{\pm 1}(z^0,t) \dot z^0 = \dot z_{\pm 1}^0 - \dot P_{\pm 1}(z^0,t)  z^0 $,
and extracting $\dot z_{\pm 1}^0$
yields \eqref{eq-zpm0}. Note that $\dot z_{\pm 1}^0 =\bigo (\eps )$, so that also
$\ddot z_{\pm 1}^0 =\bigo (\eps )$, and $ P_{\pm 1}(z^0,t) \ddot z^0 = \bigo (\eps )$.

Since $\dot\phi / \eps = | B(z^0,t)|$, the $\eps^{-2}$-terms
cancel in \eqref{zetapm1-ode} after projection with $P_{\pm 1}(z^0,t)$. Therefore, the
$\eps^{-1}$-terms are dominant and we obtain \eqref{eq-zpmpm}.

(d): Assuming $\phi (0)=0$, initial values are determined from \eqref{mfe-remainder} by
\begin{equation}\label{initial-val1}
\begin{array}{rcl}
x(0) &=& z^0 (0) +  \bigl( z^1 (0) + z^{-1} (0)\bigr) + \bigo (\eps^3) \\[1mm]
\dot x (0) &=&\displaystyle \dot z^0 (0) +  \bigl( \dot z^1 (0) + \dot z^{-1} (0)\bigr) +
\iu \frac{ \dot \phi (0) }\eps \bigl( z^1 (0) - z^{-1} (0)\bigr)  + \bigo (\eps^3 ) .
\end{array}
\end{equation}
This is a nonlinear system for $z^0(0), \dot z_0^0(0), z_1^1(0), z_{-1}^{-1}(0)$.
We write the vectors in the basis $\{v_j(z^0(0),0)\}$, and we select the dominant terms
in each equation. They are $z^0(0)$ in the upper relation of \eqref{initial-val1}, and
$\dot z_0^0 (0), z_1^1(0), z_{-1}^{-1}(0)$ in the lower relation. Fixed-point iteration then yields the
stated equations for the initial values. Note that the relation $P_j(z^0(0),0) =
P_j(x(0),0) + \bigo(\eps^2 )$ has been applied.
\qed
\end{proof}

\section{Modulated Fourier expansion of the numerical solution}
\label{sec:mfe-num}

We consider the two-step formulation \eqref{boris-twostep} of the filtered Boris algorithm,
and we write the numerical approximation $x^n$ as
\begin{equation}\label{mfe-formal-num}
x^n \approx \sum_{k\in\integer} z^k (t) \,\e^{\iu k\phi (t)/\eps} ,\qquad t=nh .
\end{equation}
We use the same notation for the coefficient functions as in Section~\ref{sec:mfe-exact}.
Note, however, that for the numerical solution these functions are not the same and depend
on the additional parameter~$h$. We again consider the basis $\{ v_j (x,t) \}$ and the corresponding
orthogonal projections $P_j(x,t)$, and we write the coefficient functions $z^k$ as in
\eqref{zetak}, with the only difference that here the argument $z^0(t)$ is the non-oscillating part
of \eqref{mfe-formal-num} and not that of \eqref{mfe-formal-exact}.

\begin{theorem}\label{thm:mfe-num}
Let $\{ x^n \}$ be a numerical solution of the filtered Boris algorithm
applied to \eqref{ode} with bounded
initial velocity \eqref{v-bound}, and suppose
that it stays in a compact set~$K$ for $0\le nh \le T$. We assume the non-resonance condition
\begin{equation}\label{non-res}
\big|\sinc\bigl(\tfrac12 kh |B(x^n,t^n)|\bigr)\big| \ge c >0 \qquad\text{for } k=1,\dots,N+1,
\end{equation}
for a fixed, but arbitrary truncation index $N\ge 2$, and (for convenience of presentation) also the bound $\eta = h/\eps \le C$. Moreover, we assume that the filter function $\Psi$ in \eqref{boris-twostep} is bounded by
$|\Psi(\iu\xi)| \le C  \,|\mathrm{tanc}(\tfrac 12 \xi)|$ for all real $\xi$,
where $\text{\rm tanc}(\xi)=\tan(\xi)/\xi$.
Then, we have
 that
\begin{equation}\label{mfe-remainder-num}
x^n = \!\!\sum_{|k|\le N}\!\! z^k (t)\, \e^{\iu k \phi (t)/\eps}  + R_N (t) ,\qquad t=nh,
\end{equation}
where the phase function is given by $\dot \phi (t) = \eps | B (z^0(t),t ) |$.

(a) and (b) The coefficient functions $z^k(t)$ together with their derivatives (up to order
$N$) as well as the remainder term and its derivative satisfy the bounds of items (a) and (b)
of Theorem~\ref{thm:mfe-exact}.

(c) The functions $z_0^0, z_{\pm 1}^0, z_1^1,z_{-1}^{-1}$ satisfy the
differential equations (with $\theta (\xi )$ used in the definition of $\bar x^n$ in \eqref{x-bar})
\begin{align}
\ddot z_0^0 &=  P_0(z^0,t) E(z^0,t) +
 2\,
P_0(z^0,t) \, \Re\! \Bigl( \iu \frac{\dot\phi}\eps z_1^1 \times B'(z^0,t)z_{-1}^{-1}\Bigr) \,\theta(\eta\dot\phi) \sinc(\eta\dot\phi/2)^2
 \nonumber
 \\[1mm]
& \qquad +  2\, \dot P_0(z^0,t) \dot z^0 + \ddot P_0(z^0,t) z^0 + \bigo (\eps^2 ) , \label{eq-z00-num}\\
\dot z_{\pm 1}^0 & =   \dot P_{\pm 1}(z^0,t)  z^0 \pm
\frac{\displaystyle \Psi \bigl( \iu {\eta \dot\phi} \bigr)}
{\displaystyle \text{\rm tanc} \Bigl(  \frac{\eta \dot\phi}2 \Bigr)} \,\iu \,\frac{\eps}{\dot\phi} \,
P_{\pm 1}(z^0,t) E(z^0,t) + \bigo (\eps^2), \label{eq-zpm0-num}\\[-1mm]
 \dot z_{\pm 1}^{\pm 1} &=  -\,\frac{1}{ \displaystyle\text{\rm tanc} 
 \Bigl(  \frac{\eta \dot\phi}2 \Bigr)}  \frac{\ddot\phi}{\dot\phi}\,z_{\pm 1}^{\pm 1}  + \bigo (\eps^2) = 
 \bigo(\eps^2) .\label{eq-zpmpm-num}
\end{align}

All other coefficient functions $z_j^k$ are given by algebraic expressions depending
on $z^0, \dot z_0^0,z_1^1,z_{-1}^{-1}$.

(d) Assuming $\phi (0)=0$,
initial values for the differential equations of item~(c) are given by the same equations as for the exact solution, up to $\bigo(\eps^2)$,
\begin{align}
\nonumber
z^0(0) &= x(0) + \frac{\dot x(0) \times B(x(0),0)}{|B(x(0),0)|^2} + \bigo(\eps^2),
\\
\label{initial-val-all}
 \dot z_0^0(0) &=  P_0(x(0),0) \dot x(0)+ \dot P_0(x(0),0)  x(0)+ \bigo (\eps^2 ) , \\
 \nonumber
  z_{\pm 1}^{\pm 1} (0) &= \mp \,\iu \frac{\eps}{\dot\phi (0)}P_{\pm 1}(x(0),0)\dot x(0) + \bigo(\eps^2).
\end{align}
The constants symbolised by the \mbox{$\bigo$-notation} are independent
of $\eps$ and $n$ with $0\le nh \le T$, but they depend on~$N$,
on the velocity bound $M$ in \eqref{v-bound}, on bounds of derivatives
of~$B$ and~$E$, and on~$T$.
\end{theorem}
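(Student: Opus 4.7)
The plan is to proceed in direct parallel with the proof of Theorem~\ref{thm:mfe-exact}, but with the continuous operators replaced by their two-step discrete counterparts coming from \eqref{boris-twostep}. I insert the formal ansatz \eqref{mfe-formal-num} together with a phase function $\phi$ that is a priori unknown into the two-step scheme, and compare coefficients of $\e^{\iu k\phi(t)/\eps}$. The key preparatory step is to expand, for each fixed~$k$ and each smooth coefficient $z^k(t)$ with $\eps$-independent derivative bounds,
\[
\frac{z^k(t+h)\e^{\iu k\phi(t+h)/\eps}-2z^k(t)\e^{\iu k\phi(t)/\eps}+z^k(t-h)\e^{\iu k\phi(t-h)/\eps}}{h^2}
\]
as a power series in $h=\eta\eps$ whose leading part contains, after factoring out $\e^{\iu k\phi(t)/\eps}$, the discrete symbol $-\frac{4}{h^2}\sin^2(\tfrac12 kh\dot\phi/\eps)+\tfrac{2\iu}{h}\sin(kh\dot\phi/\eps)\,(\partial_t+\iu k\ddot\phi/\eps)+\dots$, plus polynomial corrections obtained by Taylor-expanding $\dot\phi,\ddot\phi,\ldots$ at~$t$. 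The centred difference $(x^{n+1}-x^{n-1})/(2h)$ is treated in the same way. The nonlinearity $\Psi(h\widehat{B^n})E^n$ and the matrix function $\tfrac{2}{h}\tanh(-\tfrac12 h\widehat{\bar B^n})$ are then expanded using Taylor expansion of $B,E,\bar x$ around $z^0(t)$, with $\bar x^n$ evaluated through its definition \eqref{x-bar}-\eqref{theta} in terms of the coefficients~$z^k$.

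Matching coefficients of $\e^{\iu k\phi(t)/\eps}$ gives a system of modulation equations analogous to \eqref{zetak-ode}, in which the $\eps^{-1}$ and $\eps^{-2}$ singular coefficients appearing on the left are now $h^{-2}\sin^2(\tfrac12 kh|B|/\eps)$ and $h^{-1}\sin(kh|B|/\eps)$. I then project onto the eigenspaces of $v\mapsto v\times B(z^0,t)$ using $P_j(z^0,t)$ as in the continuous proof: the $P_0$-projection of the $k=0$ equation makes $\ddot z_0^0$ dominant and, after using that $\widehat{\bar B^n}$ on $v_{\pm1}$ eigenvectors produces $\tanh(\mp \iu h|B|/2)=\mp\iu\tan(\tfrac12 h|B|)$, isolates the factor $\theta(\eta\dot\phi)\sinc(\eta\dot\phi/2)^2$ multiplying the magnetic drift term in \eqref{eq-z00-num} (which becomes 1 precisely for the choice \eqref{theta}). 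For $k=\pm1$, the $P_{\pm1}$-projection of the same equation kills the $\eps^{-1}$ term coming from $B(z^0,t)$ itself, leaves the electric contribution with the filter ratio $\Psi(\iu\eta\dot\phi)/\tanc(\eta\dot\phi/2)$ visible in \eqref{eq-zpm0-num}, and yields \eqref{eq-zpmpm-num} after dividing by the leading $\eps^{-1}$ factor $\sin(h\dot\phi/\eps)/h=\dot\phi/\eps\cdot\tanc(\eta\dot\phi/2)\cdot\cos(\ldots)/\ldots$ and using $\dot\phi/\eps=|B(z^0,t)|$. The remaining coefficient functions $z_j^k$ with $(j,k)$ outside the four distinguished ones are algebraic in $z^0,\dot z_0^0,z_1^1,z_{-1}^{-1}$: on their left-hand side the singular symbol does not vanish, and the non-resonance assumption \eqref{non-res} together with $|\Psi(\iu\xi)|\le C|\tanc(\xi/2)|$ makes the algebraic system solvable with $\eps$-uniform bounds of the size required by \eqref{zk-est}.

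The size estimates in items (a) and (b) then follow by iterating this construction up to truncation index~$N$. The remainder $R_N$ is controlled by a discrete variation-of-constants argument: $R_N$ satisfies a linear two-step equation of the same form as \eqref{boris-twostep} with an $\bigo(\eps^N)$ defect, and the decomposition $B=\eps^{-1}B_0(0)+(\eps^{-1}(B_0(\eps x)-B_0(0))+B_1(x,t))$ used in the proof of Theorem~\ref{thm:mfe-exact} has a direct discrete analogue that yields a discrete Gronwall inequality with $\eps$-independent constant. Item (d) is obtained by writing the starting conditions: $x^0=x(0)$ is known, and $x^1$ is determined from $v^{1/2}$ via \eqref{boris-start} with $n=0$; inserting the ansatz \eqref{mfe-formal-num} at $n=0$ and $n=1$, expanding in powers of $h=\eta\eps$, projecting onto the basis $\{v_j(z^0(0),0)\}$ and using fixed-point iteration on the resulting nonlinear system for $z^0(0),\dot z_0^0(0),z_{\pm1}^{\pm1}(0)$ gives \eqref{initial-val-all}, with the $\bigo(\eps^2)$ discrepancy from the exact-solution values arising solely from higher-order terms in the expansions.

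The main obstacle I anticipate is the bookkeeping in step two: the symbols $\tfrac{2}{h}\tanh(-\tfrac12 h\widehat{\bar B^n})$ and $\Psi(h\widehat{B^n})$ depend on $\bar x^n$ and $x^n$ which themselves are modulated Fourier series, so their expansion around $z^0(t)$ must be done to sufficient order in $\eps$ while keeping track of both the non-resonant denominators and the cancellations that produce the clean factors $\theta(\eta\dot\phi)\sinc(\eta\dot\phi/2)^2$ and $\Psi(\iu\eta\dot\phi)/\tanc(\eta\dot\phi/2)$. The non-resonance condition \eqref{non-res} for $k=1,\dots,N+1$ is exactly what is needed to invert the discrete symbols at all non-dominant modes, and the bound $\eta\le C$ keeps the arguments of $\Psi,\theta,\sinc,\tanc$ in a compact range so that all the filter quantities remain bounded.
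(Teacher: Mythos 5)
Your proposal is correct and follows essentially the same route as the paper's proof: insert the modulated ansatz into the two-step formulation \eqref{boris-twostep}, expand the finite differences and the matrix functions of $\widehat B$ evaluated at the modulated series for $\bar x^n$ (whose first harmonic carries the factor $\theta$), project onto the eigenspaces $P_j(z^0,t)$ to extract the dominant derivatives and the filter factors $\theta(\eta\dot\phi)\sinc(\eta\dot\phi/2)^2$ and $\Psi(\iu\eta\dot\phi)/\tanc(\eta\dot\phi/2)$, invert the non-resonant discrete symbols for the remaining modes, and close with a discrete Gronwall bound for $R_N$ and a fixed-point argument for the initial values. The only ingredient you leave implicit is the differentiation identity $P_0\bigl(T_{\widehat B}'\,\Delta x\bigr)P_{\pm1}=\mp \tfrac{2}{\eta\dot\phi}\tan\bigl(\tfrac{\eta\dot\phi}{2}\bigr)P_0\bigl(\widehat B'\,\Delta x\bigr)P_{\pm1}$ (Lemma~\ref{lem:tanh-deriv} in the paper), which is precisely the cancellation bookkeeping you flag at the end.
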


\begin{proof} (a) and (b) We do not present the details of the proof of the existence of the modulated Fourier expansion and the bounds for the coefficient functions and the remainder term, since this uses the same kind of arguments as in previous such proofs, e.g. in \cite{hairer06gni,hairer16lta,hairer19lta}. In particular,
for $|k|=1, j\ne k$ and for $|k| \ge 2$ the construction of the coefficient functions
(see part (c) below) shows
that $z_j^k$ is multiplied by
\[
\frac 4 {\eta^2} \sin \Bigl( \frac{k\eta \dot\phi }2 \Bigr)
 \sin \Bigl( \frac{(k-j)\eta \dot\phi }2 \Bigr) .
\]
Under the non-resonance assumption \eqref{non-res} this expression is bounded from below
by a positive constant, so that an algebraic relation for $z_j^k$ can be extracted.

By construction of the coefficient functions the truncated series
of \eqref{mfe-remainder-num}
satisfies the two-step relation \eqref{boris-twostep} up to a defect of size $\bigo (\eps^N)$.
A standard discrete Gronwall argument then gives the bounds on the remainder.

(d) The initial values are obtained from
\begin{equation}\label{initial1-num}
x(0) = z^0(0) + \bigl( z^1(0) + z^{-1}(0)\bigr) + \bigo (\eps^2),
\end{equation}
which is a consequence of \eqref{mfe-formal-num}, and from
\begin{align}\nonumber
\dot x(0) &= \Phi_1\bigl(h\widehat B(x(0),0)\bigr) \dot z^0(0) + \frac{\iu\dot\phi(0)}\eps\, z^1_1(0) - \frac{\iu\dot\phi(0)}\eps\, z^{-1}_{-1}(0)
\\
&\quad -
h\Upsilon\bigl(h\widehat B(x(0),0)\bigr) E(x(0),0) + \bigo(\eps^2),
\label{initial2-num}
\end{align}
which follows from \eqref{boris-v} and Lemma~\ref{lem:differences}.
As in the proof of Theorem~\ref{thm:mfe-exact} this constitutes a nonlinear system for the
values $z^0(0), \dot z_0^0(0), z_1^1 (0), z_{-1}^{-1}(0)$. The relation \eqref{initial1-num}
yields $z^0_0(0)$. Multiplication of \eqref{initial2-num} with $P_j\bigl( z^0(0),0\bigr) =
P_j \bigr( x(0),0 \bigr) + \bigo (\eps^2 )$ gives $\dot z_0^0(0)$ for $j=0$ and
$z_{\pm 1}^{\pm 1}(0) $ for $j=\pm 1$, where we use in addition that $\Phi_1(h\widehat B(x(0),0))=\Phi_1(h\widehat B(z^0(0),0))+\bigo(\eps^2)$ and
$P_{\pm 1}\dot z^0=\dot z^0_{\pm1} - \dot P_{\pm1} z^0 = \bigo(\eps)$.
Remarkably we get, up to terms of size $\bigo (\eps^2 )$, the same formulas for the
initial values as for the exact solution.

By the uniqueness of the modulated Fourier expansion (up to $\bigo(\eps^N)$),
these relations  hold not only at time $0$, but for arbitrary times $t\le T$, except for a phase factor $e^{\mp\iu\phi(t)}$ in the equation for $z^{\pm1}_{\pm1}$. This phase factor did not appear in \eqref{initial-val-all} because we chose $\phi(0)=0$.

(c) To derive the differential equations for the coefficient functions we first expand the
perturbed argument of $B(x,t)$ in the filtered Boris algorithm as
\begin{equation}\label{mfe-formal-xbar}
\bar x^n \approx \sum_{k\in\integer} \zeta^k (t) \,\e^{\iu k\phi (t)/\eps} ,\qquad t=nh .
\end{equation}
The coefficient functions $\zeta^k(t)$ are obtained as follows: inserting the
modulated Fourier expansion \eqref{mfe-formal-num} into \eqref{boris-v},
using Lemma~\ref{lem:differences} below, and replacing $\Phi_1\bigl( h\widehat{\bar B^n}\bigr)$
by $\Phi_1\bigl( h\widehat B (z^0(t^n),t^n)\bigr)$ yields with $t=nh$
\[
v^n = \dot z_0^0 (t) + \frac {\iu \dot \phi (t)}{\eps} z_1^1(t)\,\e^{\iu \phi (t)/\eps}
- \frac {\iu \dot \phi (t)}{\eps} z_{-1}^{-1}(t)\,\e^{-\iu \phi (t)/\eps} + \bigo (\eps ) ,
\]
see also the more detailed computation in Section~\ref{sec:proof}.
Since we have $\dot z_0^0 (t) = P_0\bigl( z^0(t),t\bigr) z^0(t) + \bigo (\eps )$
and $B^n = \widehat B \bigl(z^0(t^n),t^n\bigr)+ \bigo (\eps)$, this implies
\[
\frac{v^n\times B^n}{|B^n|^2} = - z_1^1(t)\,\e^{\iu \phi (t)/\eps}
-  z_{-1}^{-1}(t)\,\e^{-\iu \phi (t)/\eps} + \bigo (\eps^2 ) ,
\]
and consequently $x^n_\odot = z^0 (t^n) + \bigo (\eps^2 )$, which shows that 
$x^n_\odot $ is an excellent approximation of the non-oscillating part of the
numerical solution $x^n$. Together with the definition \eqref{x-bar} of $\bar x^n$
we find the dominating terms of the expansion \eqref{mfe-formal-xbar} as
\begin{equation}\label{zeta0-zeta1}
\zeta^0(t) =z^0(t) + \bigo (\eps^2 ), \qquad \zeta^{\pm 1} (t) = \theta (t)\, z^{\pm 1} (t) + \bigo (\eps^2 ),
\end{equation}
where $\theta(t)=\theta(h\dot\phi(t)/\eps)=\theta\bigl(h|B(z^0(t),t)|\bigr)$.

After this preparation,
we insert the expansions \eqref{mfe-formal-num} for $x^n$
and \eqref{mfe-formal-xbar} for $\bar x^n$ into the two-step formulation
\eqref{boris-twostep} of the filtered Boris algorithm.
Using Lemma~\ref{lem:differences} below,
expanding the nonlinearities around~$\zeta^0$ and $z^0$,
and comparing the coefficients of $\e^{\iu k \phi (t)/\eps}$ yields
\[
\begin{array}{rcl}
\displaystyle \sum_{l\ge 0} \eps^{l-2} d_l^k \frac{\d^l}{\d t^l} z^k &=& \displaystyle
\sum_{k_1+k_2 = k} \! \Bigl( \!\sum_{m\ge 0 \atop s(\alpha ) = k_1} \!\!\!
 \frac {1}{m!} T_{\widehat B}^{(m)} (\zeta^0,t) \,\bfzeta^\alpha \Bigr)
\Bigl( \,\sum_{l\ge 0} \eps^{l-1} c_l^{k_2} \frac{\d^l}{\d t^l} z^{k_2} \Bigr) \cr
&+& \displaystyle
\sum_{k_1+k_2 = k} \!\Bigl(\! \sum_{m\ge 0 \atop s(\alpha ) = k_1} \!\!\!
 \frac {1}{m!} \Psi_{\widehat B}^{(m)} (z^0,t) \,\bfz^\alpha \Bigr)
\Bigl( \! \sum_{m\ge 0 \atop s(\alpha ) = k_2} \!\!\!
 \frac {1}{m!} E^{(m)} (z^0,t) \,\bfz^\alpha  \Bigr) ,
\end{array}
\]
where $T_{\widehat B}^{(m)} (x,t)$ denotes the $m$th derivative
of $T_{\widehat B} (x,t) = \frac 2h \tanh \bigl( -\frac h2 \widehat B (x,t)\bigr)$
with respect to $x$ and, similarly, $\Psi_{\widehat B}^{(m)} (x,t)$ is the $m$th derivative
of $\Psi_{\widehat B} (x,t) = \Psi \bigl( -h \widehat B (x,t)\bigr)$ with respect to $x$.
These derivatives are bounded under the assumption that
$\eta = h/\eps \le c$ and the non-resonance condition~\eqref{non-res-exact}.

For $k=0$ we obtain
\begin{equation}\label{zpp-num}
\begin{array}{rcl}
\ddot z^0 &=& \displaystyle T_{\widehat B}(\zeta^0,t) \dot z^0 + \Psi_{\widehat B} (z^0,t) E (z^0,t) \\[1mm]
 &+& \displaystyle
 2\, \Re \Bigl( \bigl(T_{\widehat B}' (\zeta^0,t) \zeta^{-1} \bigr)\frac {\iu}{\eps \eta } 
 \sin (\eta \dot \phi ) z^1 \Bigr)
+ \bigo (\eps^2 ) ,
\end{array}
\end{equation}
and for $k=\pm1 $ we get
\begin{equation}\label{z1p-num}
\eps^{-2} d_0^{\pm 1} z^{\pm 1} + \eps^{-1} d_1^{\pm 1} \dot z^{\pm 1} =
T_{\widehat B}(\zeta^0 ,t) \bigl( \eps^{-1} c_0^{\pm 1} z^{\pm 1} + c_1^{\pm 1} \dot z^{\pm 1} 
\bigr) + \bigo (\eps ) .
\end{equation}
Because of \eqref{zeta0-zeta1}, the argument $\zeta^0$ can be replaced by $z^0$
in these equations.
In the limit $h\to 0$, i.e., $\eta\to 0$ we have accordance with the equations
\eqref{zeta0-ode} and
\eqref{zetapm1-ode} for the exact solution, respectively.

To get the differential equations for the dominant coefficient functions, we shall use
the relations
\[
\begin{array}{l}
\displaystyle P_0(z^0,t) T_{\widehat B}(z^0,t) = 0, \qquad
P_{\pm 1}(z^0,t) T_{\widehat B}(z^0,t) = \pm\, \iu \frac 2h \tan \Bigl( \frac {h \dot \phi }{2\eps} \Bigr)
P_{\pm 1}(z^0,t)  , \\[2mm]
\displaystyle P_0(z^0,t) \Psi_{\widehat B}(z^0,t) = P_0(z^0,t), \qquad
P_{\pm 1}(z^0,t) \Psi_{\widehat B}(z^0,t) =  \Psi \bigl(\pm \iu \frac {h \dot \phi }{\eps} \Bigr)
P_{\pm 1}(z^0,t)   .
\end{array}
\]
Multiplying the equation \eqref{zpp-num} with $P_0(z^0,t)$ and applying the differentiation
formula of Lemma~\ref{lem:tanh-deriv} yields the differential equation
\begin{align}
P_0(z^0,t) \,\ddot z_0^0 &=  P_0(z^0,t) E(z^0,t)  \nonumber
 \\[0mm]
&\hspace{-10mm}+ 2\,
P_0(z^0,t) \, \Re\!\biggl(  -\frac2{\eta\dot\phi}\tan \Bigl(\frac{\eta\dot \phi}2\Bigr) \Bigl( \widehat B'(z^0,t)\zeta_{-1}^{-1}\Bigr)\, \iu \frac{\dot\phi}\eps
\sinc (\eta\dot\phi) z_1^1 \biggr)) 
+ \bigo (\eps^2 ) .  \nonumber
\end{align}
Using $\bigl( \widehat B ' (x,t) \Delta x\bigr) v = - v\times B ' (x,t) \Delta x$, which follows from
differentiation of $\widehat B(x,t) v = - v\times B(x,t)$, the trigonometric identity
$\sin (2\alpha ) = 2 \sin (\alpha) \cos (\alpha )$, and the second relation of \eqref{zeta0-zeta1},
this equation becomes \eqref{eq-z00-num}.

A multiplication of \eqref{zpp-num} with $P_{\pm 1} (z^0,t)$ permits to extract the dominant
first derivative $\dot z_{\pm 1}^0$ and  gives \eqref{eq-zpm0-num}.

We next consider the equation \eqref{z1p-num}. The $\eps^{-2}$-terms in the left and right
sides are contained in
\[
- \frac 4{\eps^2 \eta^2} \sin^2 \Bigl( \frac{\eta\dot\phi}2\Bigr)z^{\pm 1}\qquad\hbox{and}\qquad
\pm \frac 2{\eps h} \tanh \Bigl( - \frac h2 \widehat B(z^0,t)\Bigr) \frac \iu \eta \sin (\eta \dot \phi ) z^{\pm 1} .
\]
After multiplication with $P_{\pm 1 }(z^0,t)$ these terms cancel because of the
above formula for $P_{\pm 1}(z^0,t) T_{\widehat B}(z^0,t)$. The remaining terms lead to
\[
\dot z_{\pm1}^{\pm1} =  -\frac
{\displaystyle \Bigl( \cos (\eta\dot \phi ) + 
\tan \Bigl( \frac{\eta \dot \phi } 2 \Bigr) \sin (\eta \dot\phi ) \Bigr) \ddot \phi}
{\displaystyle  \frac 2\eta \Bigl( \sin (\eta\dot \phi ) -
\tan \Bigl( \frac{\eta \dot \phi } 2 \Bigr) \cos (\eta \dot\phi )\Bigr)}
z_{\pm1}^{\pm1} + \bigo (\eps^2 ) ,
\]
which simplifies to \eqref{eq-zpmpm-num}.
\qed
\end{proof}

In the above proof we referred to the following lemmas.

\begin{lemma}[\cite{hairer19lta}]\label{lem:differences}
For smooth functions $\phi (t)$ and $z^k(t)$ let $y^k(t) = \e^{\iu k \phi (t)/\eps}  z^k(t)$,
and denote $\eta = h/\eps$.
The finite differences of $y^k(t)$ then satisfy
\[
\begin{array}{rcl}
\delta_{2h}y^k(t) &=& \displaystyle
\frac{y^k(t+h) - y^k(t-h)}{2h}  ~=~ \e^{\iu k \phi (t)/\eps} \sum_{l\ge 0}
\eps^{l-1} c_l^k(t)  \frac{\d^l}{\d t^l} z^k(t)\\[4mm]
\delta_h^2 y^k(t) &=& \displaystyle
\frac{y^k(t+h) - 2y^k(t) + y^k(t-h)}{h^2} ~=~ \e^{\iu k \phi (t)/\eps} \sum_{l\ge 0}
\eps^{l-2} d_l^k(t)  \frac{\d^l}{\d t^l} z^k(t) ,
\end{array}
\]
where $c_{2j}^0 =0$, $c_{2j+1}^0 = \eta^{2j} /(2j+1)!$, and $d_0^0=0$,
$d_{2j}^0 = 2\eta^{2j-2}/(2j)!$, $d_{2j+1}^0 = 0$. The leading coefficients are
\begin{equation}\label{coeff-c-d}
\begin{array}{rcl}
c_0^k(t) &=& \displaystyle \frac{\iu}{\eta} \sin \bigl( k\eta \dot \phi (t)\bigr) -
\eps \frac{k\eta}2  \sin \bigl( k\eta \dot \phi (t)\bigr) \ddot \phi (t)+ \bigo (\eps^2 ) \\[4mm]
c_1^k(t) &=& \displaystyle \cos \bigl( k\eta \dot \phi (t)\bigr) + \bigo (\eps ) \\[2mm]
d_0^k(t) &=& \displaystyle -\frac{4}{\eta^2} \sin^2 \Bigl( \frac{k\eta \dot \phi (t)}2\Bigr) +
\iu\, \eps\, k \cos \bigl( k\eta \dot \phi (t)\bigr)  \ddot \phi (t) + \bigo (\eps^2 ) \\[4mm]
d_1^k(t) &=&\displaystyle  \frac {2\,\iu}\eta \sin \bigl( k\eta \dot \phi (t)\bigr) + \bigo (\eps ) .
\end{array}
\end{equation}
\end{lemma}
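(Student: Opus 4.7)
The plan is to derive both formulas directly from the Taylor expansion of $y^k(t\pm h)$, treating the phase factor and the amplitude separately and exploiting the scaling $h = \eta\varepsilon$ throughout. First I would factor $e^{\iu k\phi(t)/\eps}$ out explicitly by writing
$$
\frac{\iu k\phi(t\pm h)}{\eps} = \frac{\iu k\phi(t)}{\eps} \pm \iu k\eta\dot\phi(t) + S_{\pm}(t),
\qquad
S_\pm(t) = \iu k\sum_{j\ge 2}\frac{(\pm\eta)^{\,j}\,\eps^{j-1}}{j!}\,\phi^{(j)}(t).
$$
The first summand reappears as the extracted factor $\e^{\iu k\phi(t)/\eps}$, the second is $\bigo(1)$ and produces the oscillatory trigonometric prefactors, and the remainder $S_\pm(t)$ is $\bigo(\eps)$. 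Combining with the ordinary Taylor expansion $z^k(t\pm h)=\sum_{j\ge 0}\frac{(\pm h)^j}{j!}z^{k,(j)}(t)$ and the power series for $\e^{S_\pm}$ gives a convergent double series representation for $y^k(t\pm h)$ in which every contribution carries an explicit power of $\eps$.

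Second, I would substitute this representation into the two finite-difference operators. Collecting by the derivative order $l$ of $z^k(t)$, the coefficient in front of $\e^{\iu k\phi(t)/\eps}\,z^{k,(l)}(t)$ in $\delta_{2h}y^k$ takes the form
$$
\frac{\eta^{\,l-1}}{2\,l!}\Bigl(\e^{\iu k\eta\dot\phi}\,\e^{S_+} - (-1)^{l}\,\e^{-\iu k\eta\dot\phi}\,\e^{S_-}\Bigr),
$$
and the analogous coefficient for $\delta_h^{\,2}y^k$ is
$$
\frac{\eta^{\,l-2}}{l!}\Bigl(\e^{\iu k\eta\dot\phi}\,\e^{S_+} + (-1)^{l}\,\e^{-\iu k\eta\dot\phi}\,\e^{S_-} - 2\delta_{l,0}\Bigr).
$$
Identifying these with $\eps^{\,l-1}c_l^k(t)$ and $\eps^{\,l-2}d_l^k(t)$ respectively yields closed-form expressions for all coefficients, and expanding $\e^{S_\pm}=1+S_\pm+\tfrac12 S_\pm^2+\cdots$ separates the $\eps$-powers cleanly because $S_\pm = \bigo(\eps)$.

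Third, I would read off the leading and subleading terms of \eqref{coeff-c-d}. The leading parts come from setting $S_\pm = 0$: the identities $\e^{\iu\xi}\pm \e^{-\iu\xi} = 2\cos\xi$ or $2\iu\sin\xi$ and $2\cos\xi-2=-4\sin^2(\xi/2)$ give immediately
$c_0^k=(\iu/\eta)\sin(k\eta\dot\phi)$, $c_1^k=\cos(k\eta\dot\phi)$, $d_0^k=-(4/\eta^2)\sin^2(k\eta\dot\phi/2)$, $d_1^k=(2\iu/\eta)\sin(k\eta\dot\phi)$. The $\bigo(\eps)$ corrections in the formulas for $c_0^k$ and $d_0^k$ arise exclusively from the $j=2$ term in $S_\pm$, which is even in $\pm$: multiplying $\iu k\eta^2\eps\ddot\phi/2$ by the $S$-independent prefactors produces, after division by $2h$ or $h^2$, precisely the stated corrections $-\eps\,(k\eta/2)\sin(k\eta\dot\phi)\ddot\phi$ and $\iu\eps\,k\cos(k\eta\dot\phi)\ddot\phi$.

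Finally, the case $k=0$ is trivial but needs to be checked separately: then $S_\pm\equiv 0$ and $\e^{\pm\iu k\eta\dot\phi}=1$, so the expansions reduce to the standard central and second-difference Taylor series of $z^0(t)$, which immediately yield $c_{2j}^0=0$, $c_{2j+1}^0=\eta^{\,2j}/(2j+1)!$, $d_0^0=0$, $d_{2j+1}^0=0$, and $d_{2j}^0=2\eta^{\,2j-2}/(2j)!$. There is no genuine obstacle; the only care required is the careful bookkeeping of which combinations of $S_+^m z^{k,(l)}$-terms contribute at each power of $\eps$, but since $S_\pm = \bigo(\eps)$ only finitely many $m$ matter at any prescribed order, and the resulting series converges termwise uniformly in $t$ under the smoothness assumptions on $\phi$ and $z^k$.
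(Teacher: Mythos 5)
Your proposal is correct and follows exactly the paper's own (one-line) argument: expand $\phi(t\pm h)$ and $z^k(t\pm h)$ in Taylor series around $t$, factor out $\e^{\iu k\phi(t)/\eps}$, and collect coefficients of the derivatives of $z^k$; your explicit bookkeeping of the $\bigo(\eps)$ phase remainder $S_\pm$ and the resulting leading and first-order terms all check out against \eqref{coeff-c-d} and the $k=0$ formulas.
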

Note that these coefficients depend on $\eta$, $\eps$, and $t$ via derivatives of $\phi (t)$.

\begin{proof}
Expanding $\phi (t\pm h)$ and $z^k(t\pm h)$ into Taylor series around $t$ yields the
stated formulas.
\qed
\end{proof}

\begin{lemma}\label{lem:tanh-deriv}
Let $T_{\widehat B} (x,t) = \frac 2h \tanh \bigl( -\frac h2 \widehat B (x,t)\bigr)$,
and let $P_j(x,t)$ be the orthogonal projections onto the eigenspace
of $\widehat B (x,t)$ corresponding to the eigenvalues $\lambda_0 =0$ and
$\lambda_1 = - \iu |B(x,t)| = - \iu \dot \phi (x,t)/\eps$, and
$\lambda_{-1} =  \iu |B(x,t)| =  \iu \dot \phi (x,t)/\eps$,
respectively. Omitting the argument $(x,t)$, we then have with $\eta = h/\eps$,
\[
P_0 \Bigl( T_{\widehat B}' \Delta x \Bigr) P_{\pm 1} =
\mp \frac 2{\eta \dot\phi } \tan \Bigl( \frac{\eta \dot \phi}2 \Bigr) 
P_0 \Bigl( \widehat B' \Delta x \Bigr) P_{\pm 1} ,
\]
where prime indicates the derivative with respect to $x$.
\end{lemma}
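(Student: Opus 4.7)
The identity is a special case of the Daleckii--Krein formula expressing the Fr\'echet derivative of a holomorphic function of a matrix via divided differences between spectral projectors. The plan is to write $T_{\widehat B}=g(\widehat B)$ with the odd entire function $g(\zeta)=(2/h)\tanh(-h\zeta/2)$ and to apply the holomorphic functional calculus. The eigenvalues $\{0,\pm\iu|B|\}$ of $\widehat B$ lie in the strip of analyticity of $g$, and the non-resonance assumption \eqref{non-res} with $k=1$ guarantees that a positively oriented contour $\Gamma$ can be placed around them while avoiding the poles of $\tanh(-h\lambda/2)$.

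The first step uses $g(\widehat B)=(2\pi\iu)^{-1}\oint_\Gamma g(\lambda)(\lambda-\widehat B)^{-1}\,d\lambda$ and differentiates under the integral, with the standard resolvent identity $(\lambda-\widehat B)^{-1}{}'[\Delta x]=(\lambda-\widehat B)^{-1}\widehat B'[\Delta x]\,(\lambda-\widehat B)^{-1}$, to obtain
\[
T_{\widehat B}'[\Delta x]=\frac{1}{2\pi\iu}\oint_\Gamma g(\lambda)\,(\lambda-\widehat B)^{-1}\bigl(\widehat B'[\Delta x]\bigr)(\lambda-\widehat B)^{-1}\,d\lambda.
\]
The second step multiplies this by $P_0$ on the left and $P_{\pm 1}$ on the right. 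Because each projector commutes with $\widehat B$ (and hence with its resolvent) and reduces the resolvent on its range to the scalar $(\lambda-\lambda_j)^{-1}$, both resolvent factors can be pulled through, leaving
\[
P_0\,T_{\widehat B}'[\Delta x]\,P_{\pm 1}=\Bigl(\frac{1}{2\pi\iu}\oint_\Gamma\frac{g(\lambda)\,d\lambda}{(\lambda-\lambda_0)(\lambda-\lambda_{\pm 1})}\Bigr)\,P_0\bigl(\widehat B'[\Delta x]\bigr)P_{\pm 1}.
\]
By the residue theorem the contour integral equals the divided difference $\bigl(g(\lambda_0)-g(\lambda_{\pm 1})\bigr)/(\lambda_0-\lambda_{\pm 1})$, which reduces to $g(\lambda_{\pm 1})/\lambda_{\pm 1}$ because $g(0)=0$.

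The last step is routine trigonometric bookkeeping. Using $\lambda_{\pm 1}=\mp\iu\dot\phi/\eps$ and the identity $\tanh(\iu\alpha)=\iu\tan\alpha$, one finds $g(\lambda_{\pm 1})=\pm(2\iu/h)\tan(\eta\dot\phi/2)$, so that the scalar factor $g(\lambda_{\pm 1})/\lambda_{\pm 1}$ collapses, after cancelling a factor $\iu$ and using $h|B|=\eta\dot\phi$, to the trigonometric coefficient asserted in the lemma. I expect no genuine obstacle in this argument; the only care needed is the complex-sign bookkeeping, together with a verification that the contour $\Gamma$ remains inside the strip of analyticity of $\tanh(-h\lambda/2)$ as granted by the non-resonance hypothesis. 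A completely elementary alternative, useful as a sanity check, is to diagonalise $\widehat B=\sum_j\lambda_j P_j$ and compute $g(\widehat B+s\widehat{\Delta B})$ to first order in $s$ by non-degenerate eigenvalue perturbation, which recovers the same divided-difference coefficient without any contour machinery.
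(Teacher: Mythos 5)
Your argument is correct and reaches the same scalar coefficient as the paper, but by a genuinely different route. The paper's proof is entirely elementary: it expands $\tanh$ in a Taylor series, differentiates $\widehat B^{\,l}$ term by term, and observes that sandwiching between $P_0$ and $P_{\pm1}$ kills every summand except $P_0(\widehat B'\Delta x)\widehat B^{\,l-1}P_{\pm1}=\lambda_{\pm1}^{\,l-1}P_0(\widehat B'\Delta x)P_{\pm1}$, because $P_0\widehat B=0$; resumming the series gives $g(\lambda_{\pm1})/\lambda_{\pm1}$ with $g(\zeta)=\frac2h\tanh(-\frac h2\zeta)$. Your Daleck\u{\i}i--Kre\u{\i}n/resolvent argument produces exactly the same divided difference $\bigl(g(\lambda_0)-g(\lambda_{\pm1})\bigr)/(\lambda_0-\lambda_{\pm1})$ and then uses $g(0)=0$; the contour machinery buys generality (it gives the coefficient for an arbitrary pair $P_iT'_{\widehat B}P_j$, not just $i=0$) at the cost of having to justify the contour, whereas the paper's version needs only $P_0\widehat B=0$ and normality of $\widehat B$. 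One small correction on the contour: the poles of $\tanh(-h\lambda/2)$ sit at $\lambda=\iu(2k+1)\pi/h$, so keeping them away from $\pm\iu|B|$ amounts to bounding $\cos(h|B|/2)$ away from zero, which is the $k=2$ case of \eqref{non-res} (via $\sin(h|B|)=2\sin(h|B|/2)\cos(h|B|/2)$), not the $k=1$ case you cite.

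The one loose end is precisely the ``complex-sign bookkeeping'' you declared routine and then did not carry out. Since $g$ is odd, the quotient $g(\lambda)/\lambda$ is an \emph{even} function, so your divided difference takes the \emph{same} value
\[
\frac{g(\lambda_{\pm1})}{\lambda_{\pm1}}=-\frac{2}{\eta\dot\phi}\tan\Bigl(\frac{\eta\dot\phi}{2}\Bigr)
\]
for both choices of sign; it cannot reproduce a genuinely sign-dependent coefficient $\mp\frac{2}{\eta\dot\phi}\tan(\frac{\eta\dot\phi}2)$. Your computation is the correct one: redoing the paper's own series summation carefully gives the same sign-independent value, and the way the lemma is used in the proof of Theorem~\ref{thm:mfe-num} (the real factor $-\frac2{\eta\dot\phi}\tan(\frac{\eta\dot\phi}2)$ is pulled outside a $\Re(\cdot)$, which is only legitimate if it is the same for the $+1$ and $-1$ blocks) confirms that the intended coefficient is the uniform $-\frac{2}{\eta\dot\phi}\tan(\frac{\eta\dot\phi}2)$. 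So you should state the result with the uniform minus sign rather than asserting agreement with the displayed $\mp$; as written, your claim that the scalar ``collapses to the trigonometric coefficient asserted in the lemma'' is literally true only for the upper sign.
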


\begin{proof}
Writing $\tanh$ as a Taylor series with coefficients $\gamma_l$ and differentiating
term by term, we obtain
\[
\begin{array}{rcl}
P_0 \Bigl( T_{\widehat B} ' \Delta x \Bigr) P_{\pm 1} &=&\displaystyle \frac 2h \sum_{l\ge 1} \gamma_l
\Bigl( - \frac h2 \Bigr)^l P_0  \Bigl( \widehat B ' \Delta x \Bigr) \widehat B^{l-1} P_{\pm 1} \\
&=& \displaystyle\frac 2h \sum_{l\ge 1} \gamma_l
\Bigl( - \frac h2 \Bigr)^l P_0  \Bigl( \widehat B ' \Delta x \Bigr)
 \Bigl(\mp \iu\, \frac {\dot\phi}\eps \Bigr)^{l-1}P_{\pm 1}\\
&=& \displaystyle \frac{2\iu}{\eta \dot \phi} \tanh \Bigl( \pm\iu \frac{\eta \dot\phi }{2} \Bigr) P_0 
\Bigl( \widehat B ' \Delta x \Bigr) P_{\pm 1} .
\end{array}
\]
This proves the statement of the lemma.
\qed
\end{proof}

\section{Proof of Theorem~\ref{thm:main}}
\label{sec:proof}


Theorems~\ref{thm:mfe-exact} and~\ref{thm:mfe-num} show that the coefficient functions $z^k(t)$ (and also $\dot z^0(t)$) of the modulated Fourier expansions of the exact and numerical solutions coincide up to $\bigo(\eps^2)$ for the choice \eqref{theta} and $\,\Psi(\zeta)=\mathrm{tanch}(\zeta/2)$. This also shows that the phase functions $\phi$ (with $\dot \phi(t)= \eps |B(z^0(t),t)|$) differ only by $\bigo(\eps^2)$, respectively. Since all coefficient functions $z^k$ of the modulated Fourier expansion with the exception of $z^0$ are of size $\bigo(\eps)$ or smaller, this yields that all summands $z^k(t)\e^{\iu k\phi(t)/\eps}$ still differ only by $\bigo(\eps^2)$.
So we obtain the $\bigo(\eps^2)$ error bound for the positions as stated in Theorem~\ref{thm:main}.

We now turn to the error bound for the velocities.
By Theorem~\ref{thm:mfe-exact}, using that $\dot z^{\pm 1}_{\pm 1}=\bigo(\eps^2)$ and $z^k_j=\bigo(\eps^3)$ for $|k|=1$ and $k\ne j$ and for $|k|\ge 2$ and all $j=-1,0,1$, together with their derivatives, the velocity of the exact solution satisfies
\begin{equation}\label{v-mfe}
v(t)=\dot x(t) = \dot z^0(t) + \frac{\iu\dot\phi(t)}\eps\, z^1_1(t) \, \e^{\iu\phi(t)/\eps} - \frac{\iu\dot\phi(t)}\eps\, z^{-1}_{-1}(t) \, \e^{-\iu\phi(t)/\eps} + \bigo(\eps^2).
\end{equation}
We shall show below that the numerical solution admits  the same expansion with functions
$\phi (t), \dot z^0(t)$, $z^1_1(t)$, $z^{-1}_{-1}(t)$ that correspond to the modulated Fourier expansion
\eqref{mfe-formal-num} of the numerical solution and not to \eqref{mfe-formal-exact} of the exact solution.
By Theorems~\ref{thm:mfe-exact} and \ref{thm:mfe-num}, these functions differ only
by $\bigo (\eps^2)$.
Because of the denominator $\eps$ in the second and third terms on the
right-hand side of \eqref{v-mfe}, this yields
\begin{equation} \label{v-err-mfe}
v^n - v(t^n)= \bigo(\eps),\qquad \text{but}\qquad v^n_\parallel - v_\parallel(t^n)= \bigo(\eps^2),
\end{equation}
and proves the statement of Theorem~\ref{thm:main}.

Using Lemma~\ref{lem:differences} and $\ddot\phi(t)=\bigo(\eps)$, we have, with $t=nh$, that
\[
\frac{x^{n+1} - x^{n-1}}{2h} = \dot z^0 (t) + \sinc\bigl( \eta \dot\phi (t)\bigr)
\frac{\iu \dot \phi (t)}{\eps} \Bigl( z_1^1(t) \e^{\iu \phi (t)/\eps} - z_{-1}^{-1}(t) \e^{-\iu \phi (t)/\eps} \Bigr)
+ \bigo (\eps^2 ) .
\]
A consequence of the maximal ordering in \eqref{ode} is that $\Phi_1\bigl( h \widehat B(\bar x^n,t^n)\bigr)
=  \Phi_1\bigl( h \widehat B(z^0(t^n),t^n)\bigr) + \bigo (\eps^2)$, and
$\Upsilon \bigl( h \widehat B( x^n,t^n)\bigr)
= \Upsilon\bigl( h \widehat B(z^0(t^n),t^n)\bigr) + \bigo (\eps^2)$.
Splitting $\Phi_1(\cdot ) \dot z^0$ into $\dot z^0 + \bigl(\Phi_1(\cdot ) - I \bigr)\dot z^0$
and using $\Upsilon (\zeta ) = \bigl( \Phi_1 (\zeta ) -1\bigr)/\zeta$, we
therefore have
\begin{align*}
v^n &= \Phi_1 \bigl( h\widehat B(\bar x^n,t^n)\bigr) \frac{x^{n+1} -x^{n-1}}{2h} -
h\Upsilon \bigl(h\widehat B( x^n,t^n)\bigr) E(x^n,t^n)\\
&= \dot z^0(t) + 
\frac{\iu \dot \phi (t)}{\eps} \Bigl( z_1^1(t) \e^{\iu \phi (t)/\eps} - z_{-1}^{-1}(t) \e^{-\iu \phi (t)/\eps} \Bigr)\\
&\hspace{12.5mm} + h\Upsilon\bigl( h \widehat B(z^0(t),t)\bigr) \Bigl(\widehat B\bigl(z^0(t),t\bigr) \dot z^0(t)
-E\bigl(z^0(t),t\bigr)\Bigr)  + \bigo (\eps^2 ) .
\end{align*}
Since $\Upsilon (0)=0$ we have $\Upsilon\bigl( h \widehat B(z^0(t),t)\bigr)P_0(z^0(t),t) = 0$.
On the other hand
\[
P_{\pm 1}(z^0(t),t) \Bigl(\widehat B\bigl(z^0(t),t\bigr) \dot z^0(t)
-E\bigl(z^0(t),t\bigr)\Bigr) = \bigo (\eps^2 )
\]
which follows from \eqref{eq-zpm0-num} for $\Psi (\iu y) = \text{\rm tanc}  (y/2)$. This proves
the relation \eqref{v-mfe} also for the numerical solution.

\section{A two-point filtered Boris algorithm}
\label{sec:boris-2}
Algorithm~\ref{alg:boris} evaluates the magnetic field $B$ at $\bar x^n$ given by \eqref{x-bar}--\eqref{theta}, which can be far from both $x^n$ and the guiding center approximation $x^n_\odot$ of \eqref{x-gc} when $h|B(x_n)|$ is close to a nonzero integral multiple of $2\pi$. In the following we propose an alternative filtered Boris algorithm with the same second-order convergence properties as in Theorem~\ref{thm:main}, which evaluates the magnetic field at the two points $x^n$ and $x^n_\odot$.

\begin{algorithm}[Two-point filtered Boris algorithm]
\label{alg:boris-2}
Given $(x^n , v^{n-1/2})$, the algorithm computes $(x^{n+1} , v^{n+1/2})$
as follows, with $B^n=B(x^n,t^n)$, $B^n_\odot=B(x^n_\odot,t^n)$ and $E^n=E(x^n,t^n)$:
\begin{equation}\label{filtered-boris-onestep2}
\begin{array}{rcl}
v^{n-1/2}_+ &=&  v^{n-1/2} + \frac h2\, \Psi(h\widehat B^n)\,E^n \\[2mm]
\Phi_2(h\widehat B^n_\odot) (v^{n+1/2}_- - v^{n-1/2}_+)  &=& 
\frac{h}2\,  \Phi_1(h\widehat B^n)\bigl(v^{n+1/2}_- + v^{n-1/2}_+\bigr) \times B^n\\[3mm]
v^{n+1/2} &=&  v^{n+1/2}_- + \frac h2\, \Psi(h\widehat B^n)\,E^n \\[2mm]
x^{n+1} &=& x^n + h\,  v^{n+1/2} ,
\end{array}
\end{equation}
where $\Psi (\zeta ) = \tanch (\zeta/2)$ and
$\displaystyle\Phi_1(\zeta ) = \frac 1{\sinch (\zeta )}$ are as in
Algorithm~\ref{alg:boris}, and $\displaystyle\Phi_2 (\zeta ) = \frac 1{\sinch(\zeta /2)^2} $.

The velocity approximation $v^n$ is again computed by \eqref{boris-v}, with $B^n$ instead of $\bar B^n$.
\end{algorithm}

For constant $B$, Algorithms~\ref{alg:boris} and~\ref{alg:boris-2} are identical and explicit.
In the general case, both methods are implicit, but this time the fixed-point iteration for $x^n_\odot$  requires not only  the evaluation of matrix functions by the Rodriguez formula,
but in addition the solution of a linear system with the $3\times 3$ matrix $ \Phi_2(h\widehat B^n_\odot)+ \frac 12  h \widehat B^n \Phi_1(h\widehat B^n)$.
We further note that in the case of a vanishing electric field, $E^n=0$, Algorithm~\ref{alg:boris} preserves the velocity norm 
$|v^{n+1/2}|=|v^{n-1/2}|$, which is satisfied only approximately up to $\bigo(h\eps)$ by Algorithm~\ref{alg:boris-2}. While these properties are unfavourable for Algorithm~\ref{alg:boris-2}, our numerical experiments indicate that it yields higher accuracy than Algorithm~\ref{alg:boris-2} for stepsizes such that $h|B|$ is large, and in particular it is less sensitive to near-resonances where $h|B|$ is close to an integral multiple of $2\pi$. 

The {\it two-step formulation} of Algorithm~\ref{alg:boris-2} is
\begin{align}
 &\frac{x^{n+1} - 2x^n + x^{n-1}}{h^2}
 \label{boris-2-twostep}
 \\
 \nonumber
&\quad = \,\Phi_2( h\widehat B(\bar x^n,t^n))^{-1}\Bigl(\Phi_1( h\widehat B^n) \frac{x^{n+1} - x^{n-1}}{2h}\times B^n\Bigr)+   \Psi( h  \widehat B^n ) E^n .
\end{align}

The {\it starting value} $v^{1/2}$ is chosen such that formulas
\eqref{filtered-boris-onestep2} and \eqref{boris-v}
also hold for $n=0$. With the abbreviations
\begin{align*}
& \Lambda^n =  \Phi_2( h\widehat B^n_\odot)^{-1}\Phi_1( h\widehat B^n), \ \Psi^n = \Psi (h\widehat B^n ),\ \Upsilon^n=\Upsilon(h\widehat B^n),
\\
&\Phi^n_\pm =
( I \mp
 \tfrac 1 2 \,  \Lambda^n h\widehat B^n )  \sinch (h\widehat B^n) ,
 \\[1mm]
 &\Psi^n_\pm = \Psi^n \pm 2\Phi^n_\pm \Upsilon^n,
\end{align*}
we find, for arbitrary $n$, that like in \eqref{boris-start},
$$
 v^{n\pm 1/2} = \Phi^n_\pm v^n
 \pm \tfrac h 2 \,\Psi^n_\pm  E^n,
$$
and the one-step map ${(x^n,v^n)\mapsto (x^{n+1},v^{n+1})}$ is then again given by \eqref{expint-n} with these modified matrices $\Phi^n_\pm$ and $\Psi^n_\pm$.

For the two-point filtered Boris algorithm, the second-order convergence result of Theorem~\ref{thm:main} in $x$ and $v_\parallel$ and the first-order convergence in $v_\perp$ remain valid, as can be shown by an adaptation of the proof of Theorem~\ref{thm:mfe-num}, for which we omit the details.

\section{Numerical experiment}
\label{sec:num}

As an illustrative
numerical experiment, we consider  the charged-particle motion in
the magnetic field
$$B(x,t) =\nabla \times \frac 1 \eps \,  \left(
                              \begin{array}{c}
                             0 \\
                             x_1\\
                              0 \\
                              \end{array}
                            \right)+\nabla \times  \,  \left(
                              \begin{array}{c}
                            0  \\
                            x_1 x_3 \\
                              0  \\
                              \end{array}
                            \right)
                            =\frac 1 \eps \,  \left(
                              \begin{array}{c}
                              0\\
                              0 \\
                              1 \\
                              \end{array}
                            \right)
 +  \left(
     \begin{array}{c}
      -  x_1 \\
       0  \\
      x_3 \\
     \end{array}
   \right),
 $$
 and the electric field $E(x,t)=-\nabla_x U(x)$ with the potential
$$U(x)=\frac{1}{\sqrt{x_1^2+x_2^2}}.$$
 The initial values are chosen as $x(0)=(\frac 1 3,\frac 1 4,\frac 1
2)^{\intercal}$ and $v(0)=(\frac 2 5,\frac 2 3,1)^{\intercal}$. We
solve this problem for $0\le t \le 1$ with
$h=\epsilon,4\epsilon,16\epsilon$ and compare the numerical errors of the following methods:
\begin{itemize}
\item the standard Boris algorithm, 
\item Exp-A: the filtered Boris method of Algorithm~\ref{alg:boris} with $\theta=1$ in \eqref{x-bar} (where $\bar x^n=x^n$ and the method is explicit),
\item Imp-A: the filtered Boris method of Algorithm~\ref{alg:boris}  with $\theta$ of \eqref{theta},
\item Two P-A:  the two-point filtered Boris method of Algorithm~\ref{alg:boris-2}.
\end{itemize}
The errors
 in $x$ and $ v_\parallel, v_\perp$ against different
$\epsilon=1/2^j$ are displayed in Fig.~\ref{figure-err1}, where
$j=4,\ldots,13$.   Then we fix $\epsilon=1/2^{10}$ and show the errors at $t=1$ against $h/\epsilon$  in  Fig.~\ref{figure-err11}.
It is observed that all three filtered Boris methods improve considerably over the standard Boris method, and the optimally filtered methods Imp-A and Two P-A show second order, whereas method Exp-A only shows first order. Methods Exp-A and Two P-A behave very similar away from stepsize resonances, but method Two P-A appears more robust near stepsize resonances. For the implicit methods Imp-A and Two P-A, the error behaviour remains essentially unchanged after just one fixed-point iteration.

 \begin{figure}[ptb]
\centering
\includegraphics[width=3.9cm,height=4.8cm]{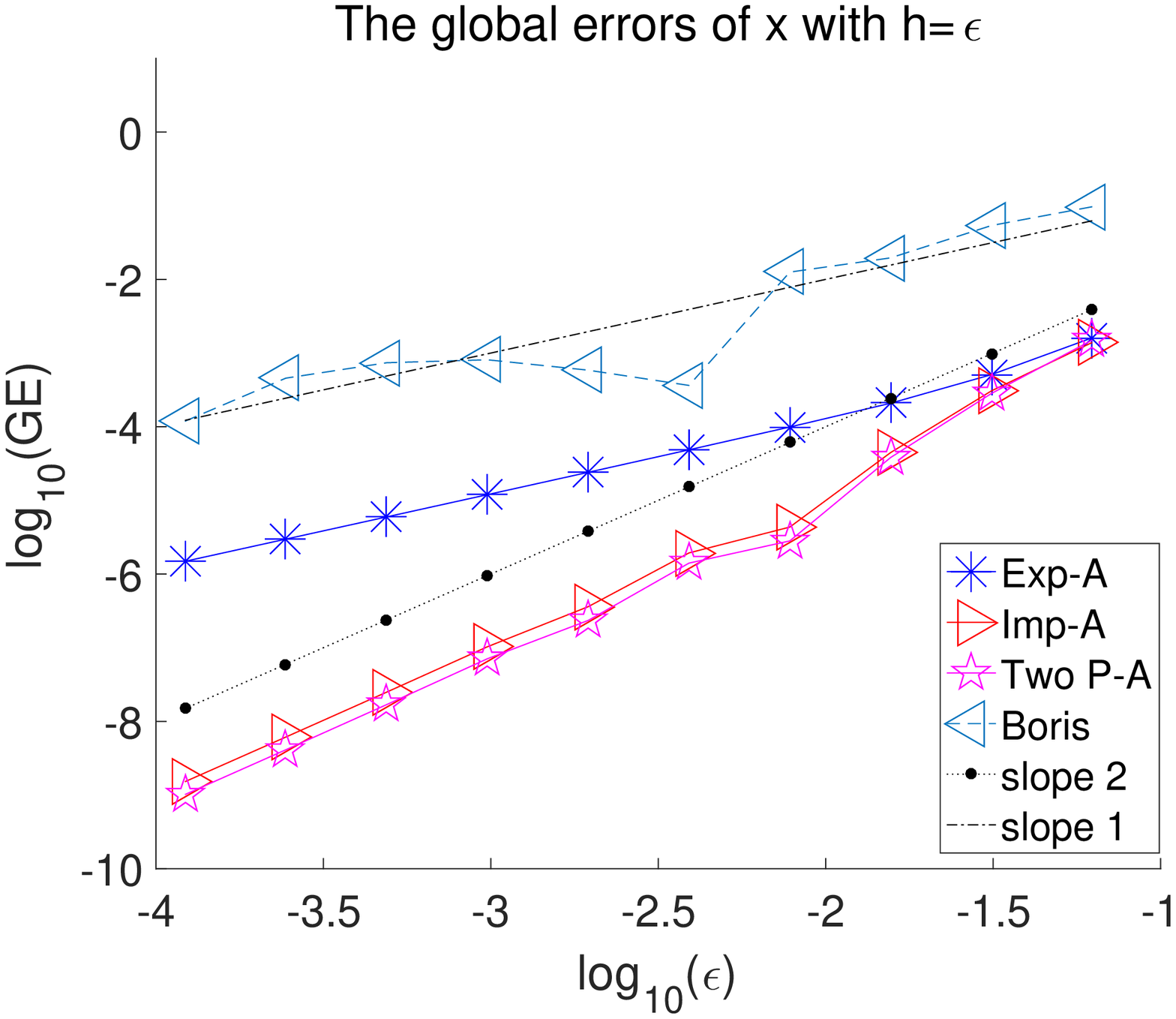}
\includegraphics[width=3.9cm,height=4.8cm]{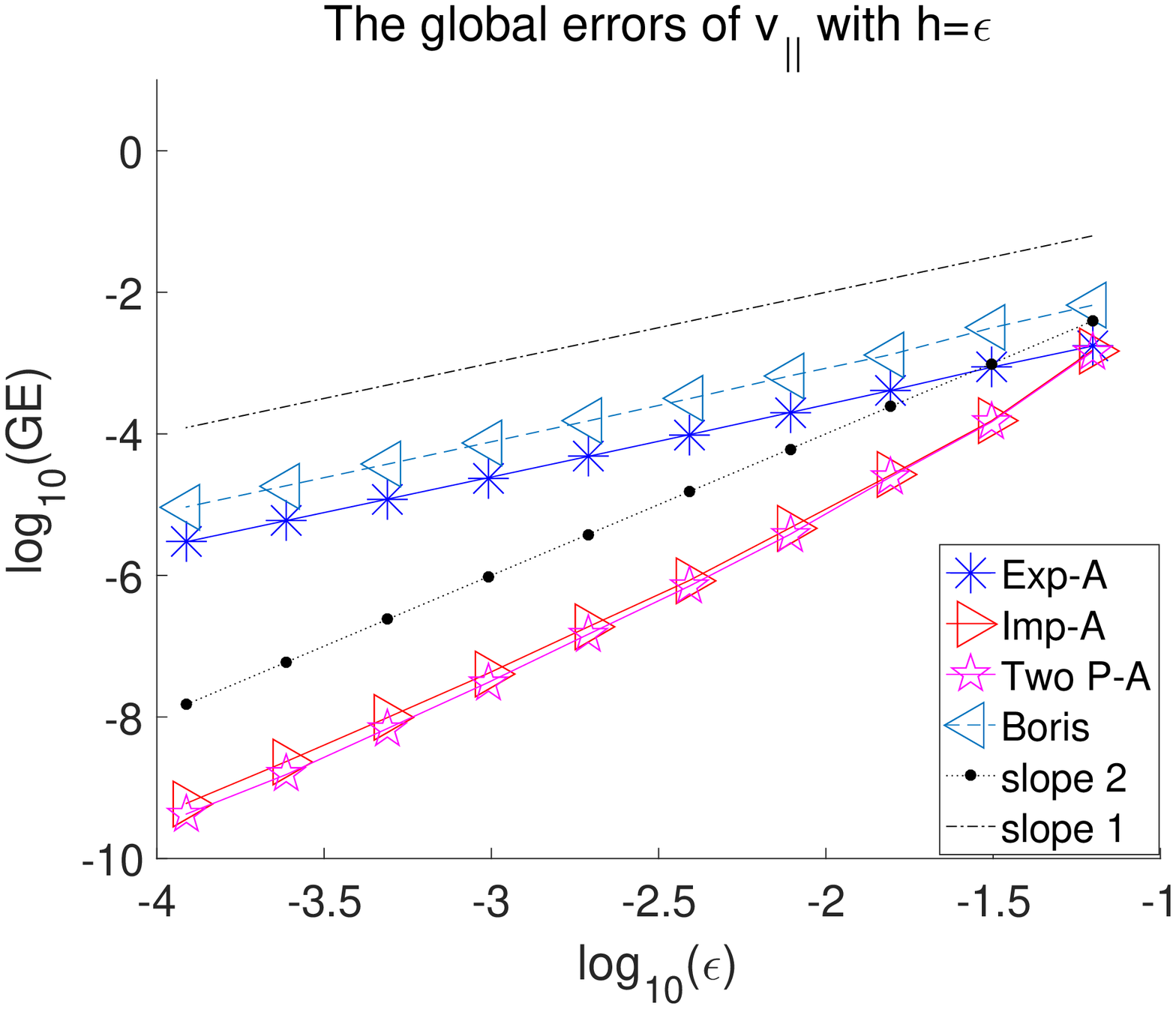}
\includegraphics[width=3.9cm,height=4.8cm]{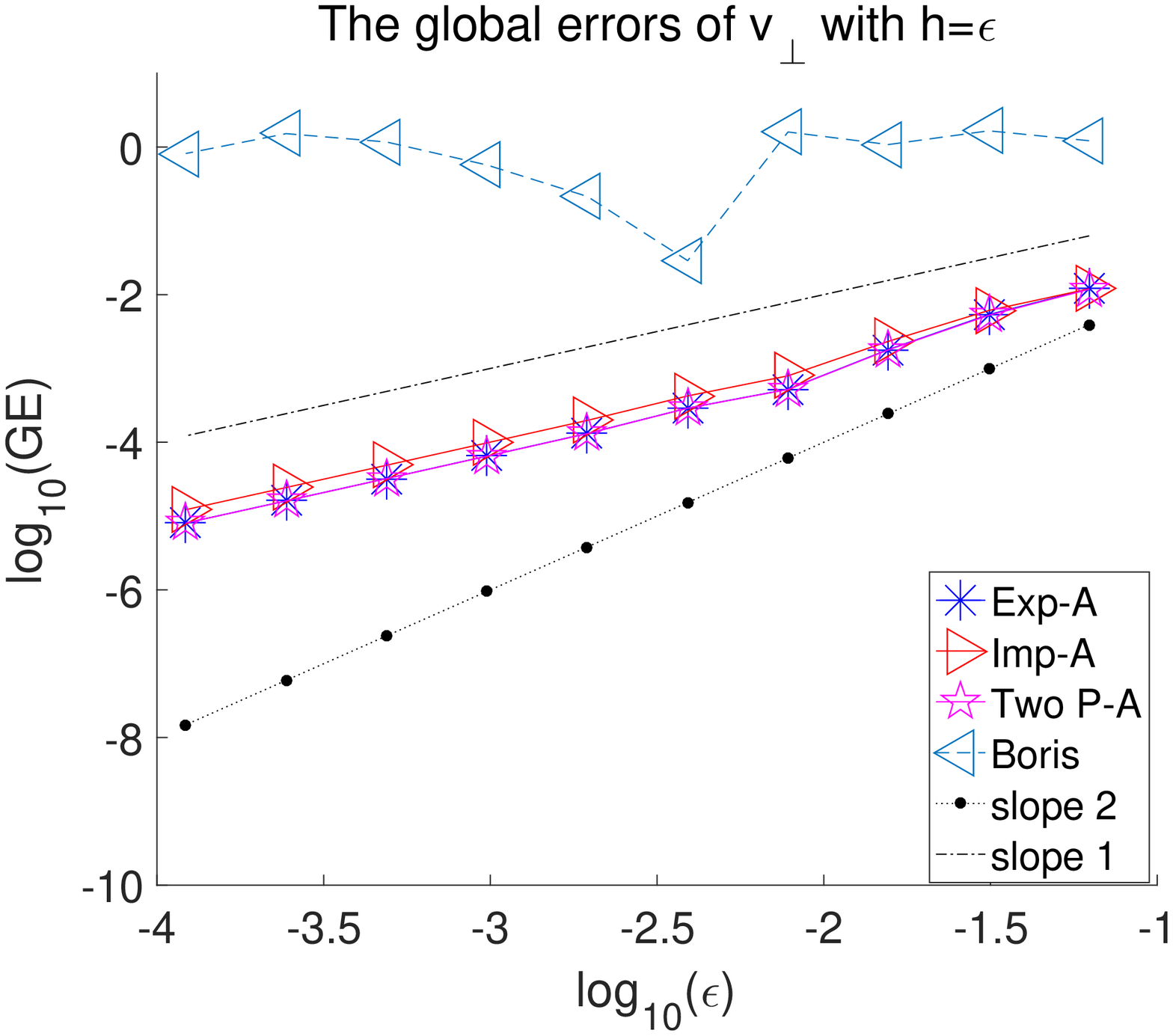}\\
\includegraphics[width=3.9cm,height=4.8cm]{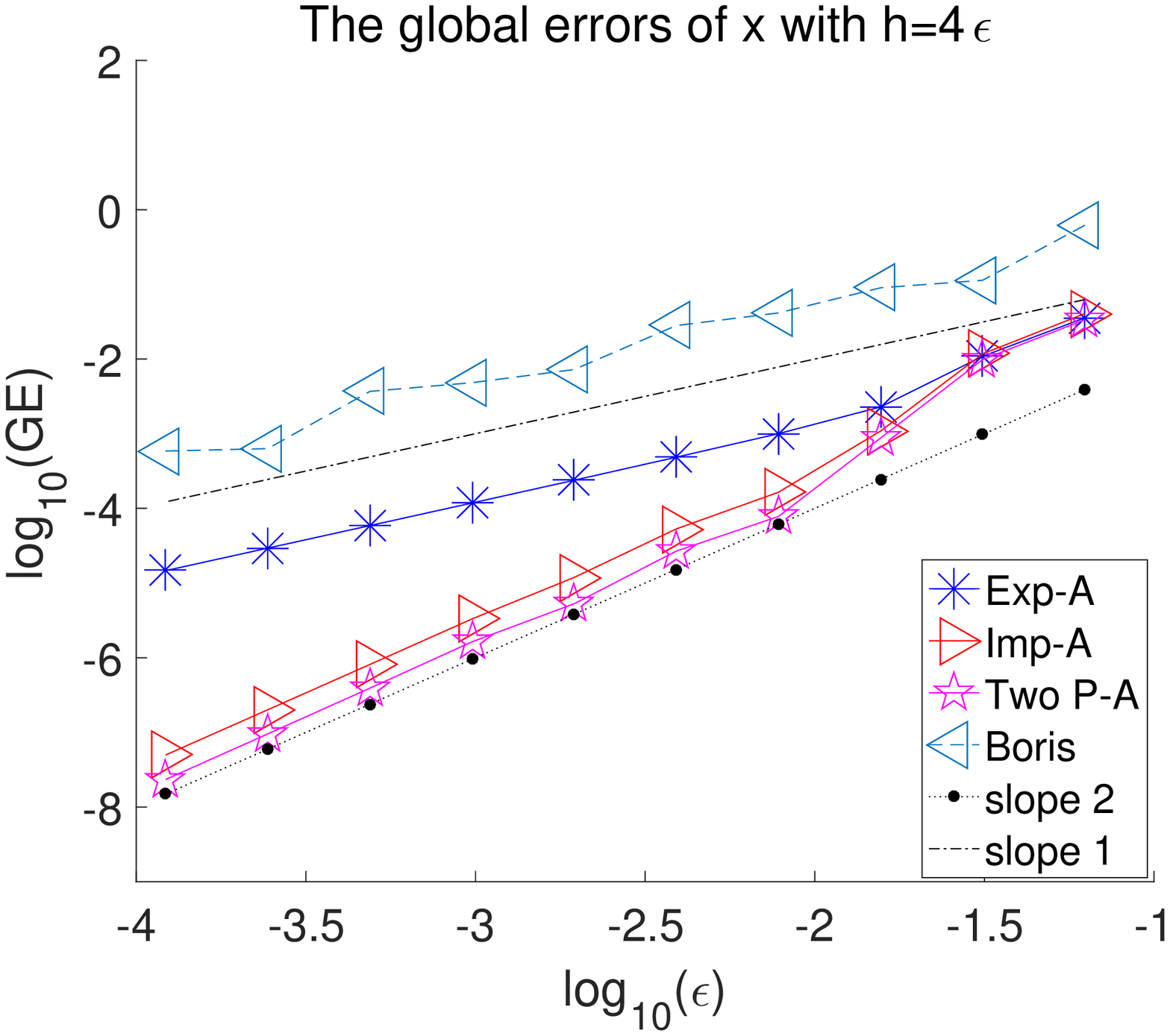}
\includegraphics[width=3.9cm,height=4.8cm]{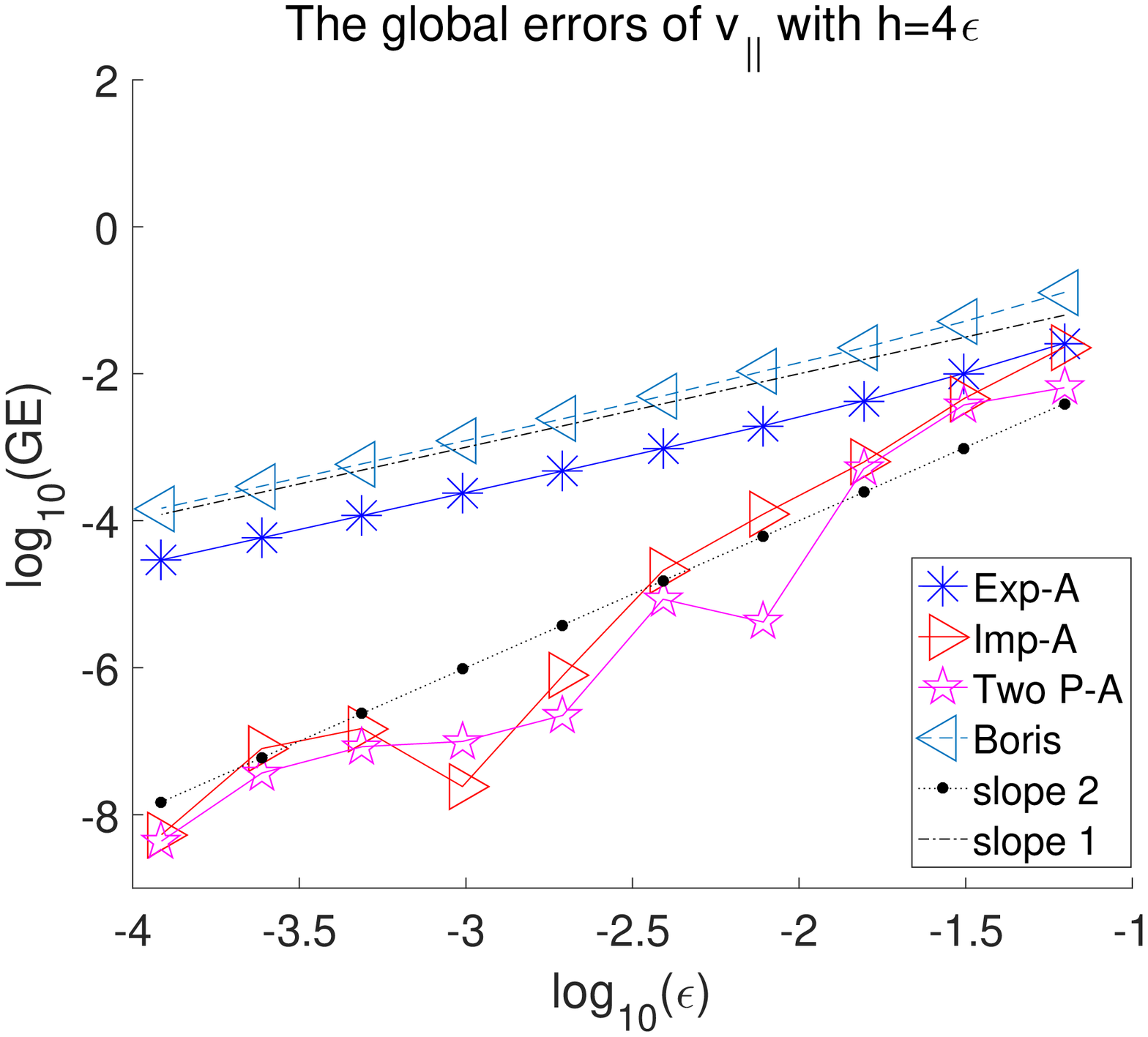}
\includegraphics[width=3.9cm,height=4.8cm]{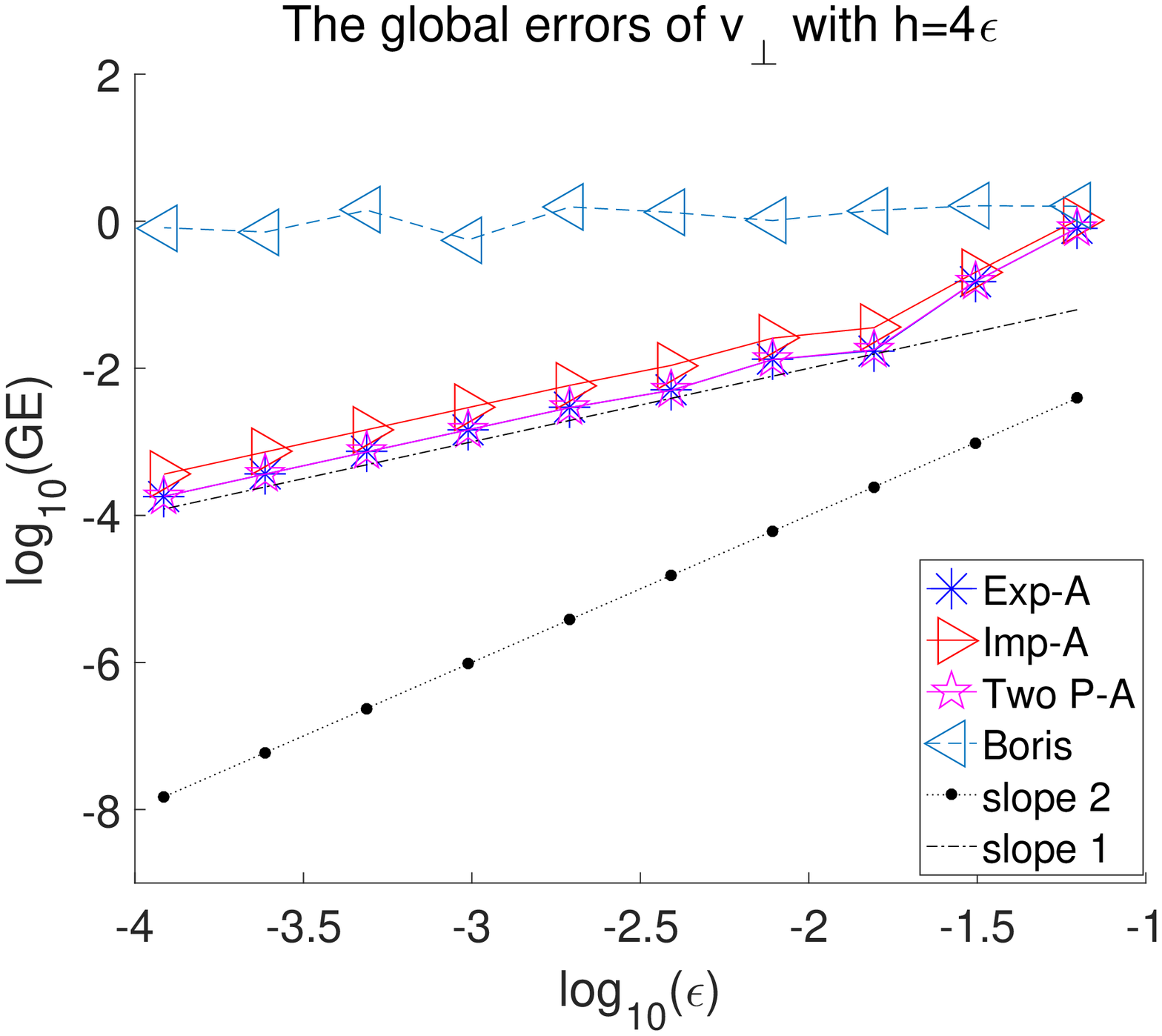}\\
\includegraphics[width=3.9cm,height=4.8cm]{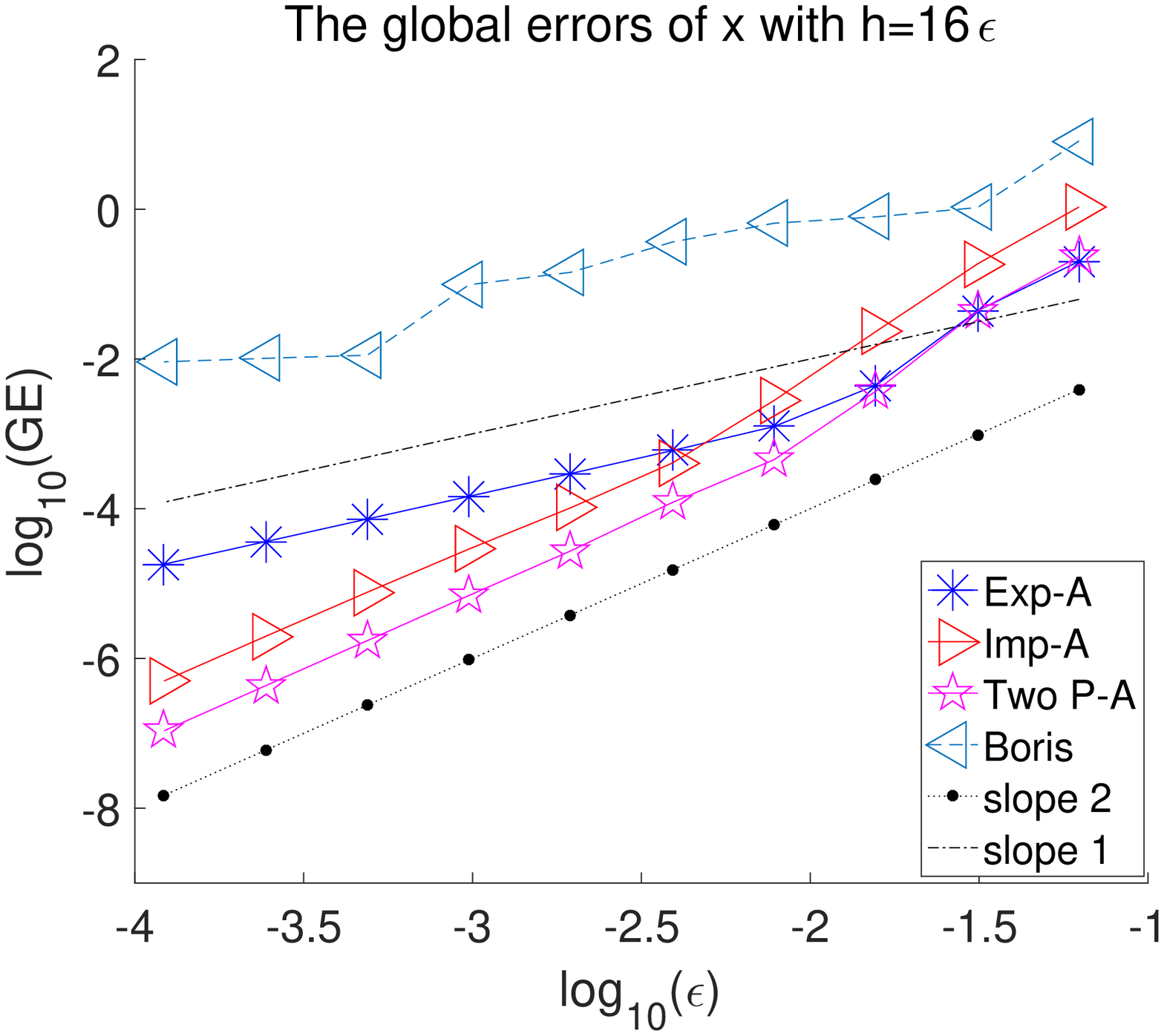}
\includegraphics[width=3.9cm,height=4.8cm]{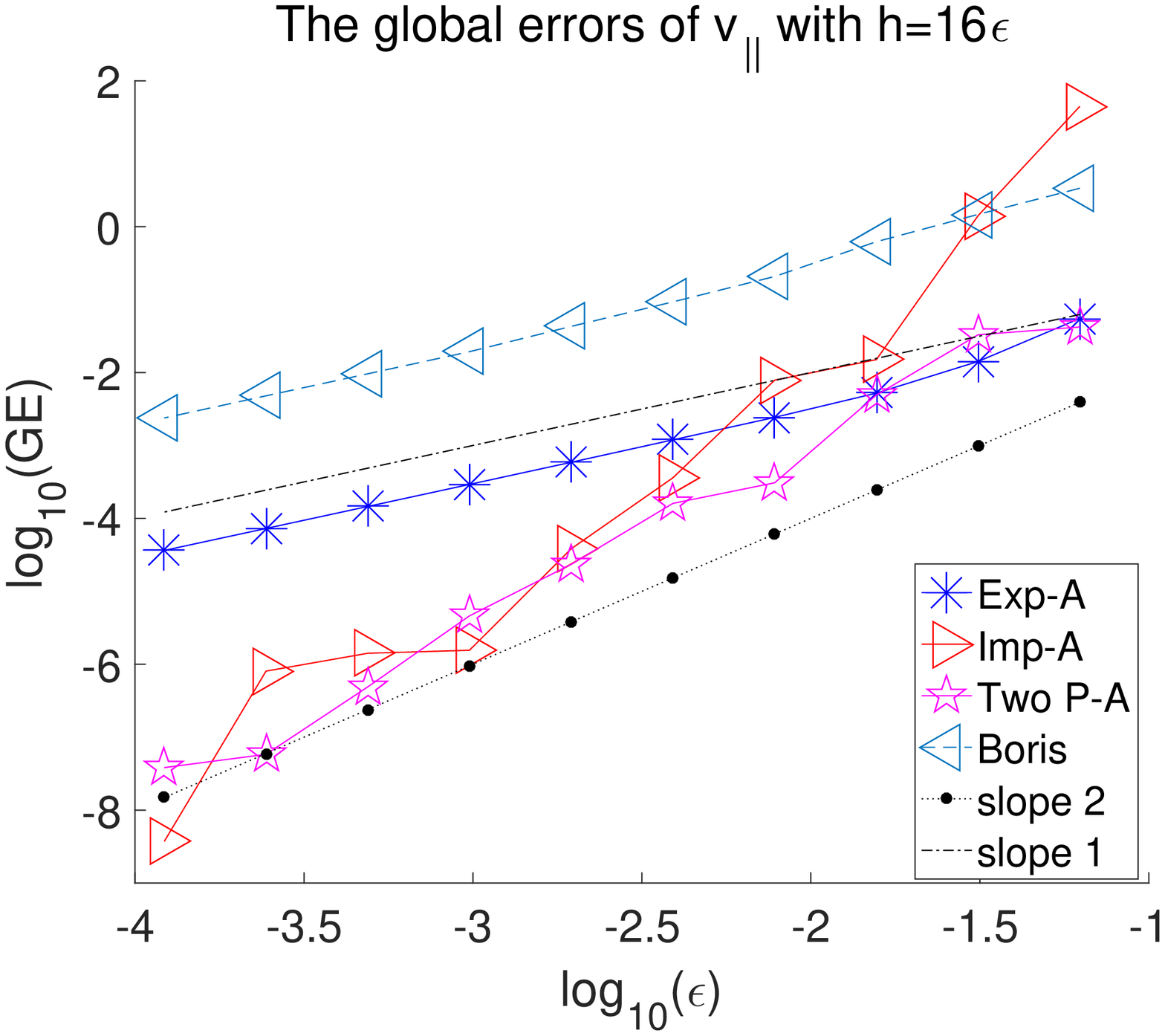}
\includegraphics[width=3.9cm,height=4.8cm]{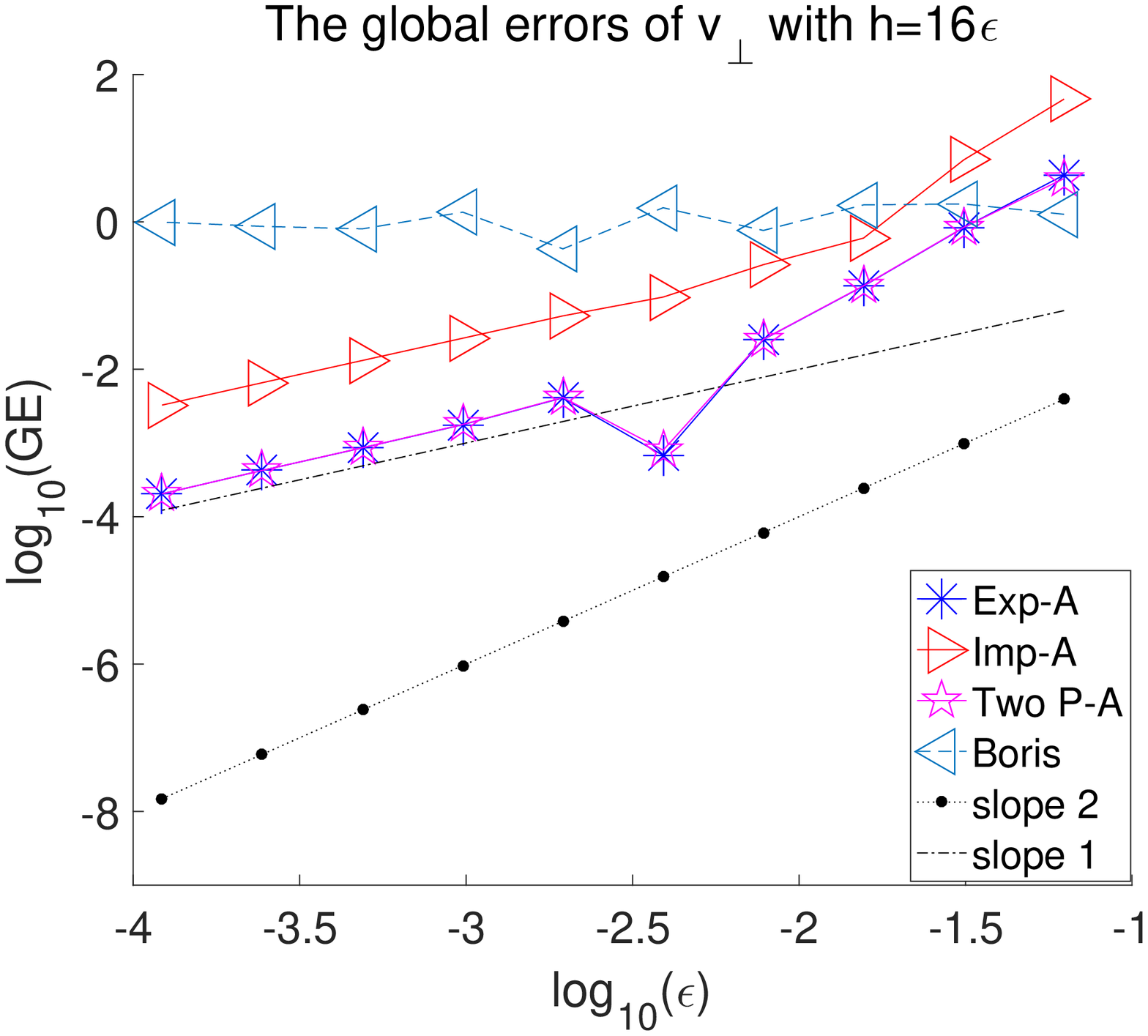}\\
\caption{The logarithm of the  global error against the logarithm
of~$\epsilon$. } \label{figure-err1}
\end{figure}

 \begin{figure}[ptb]
\centering
\includegraphics[width=3.9cm,height=3.8cm]{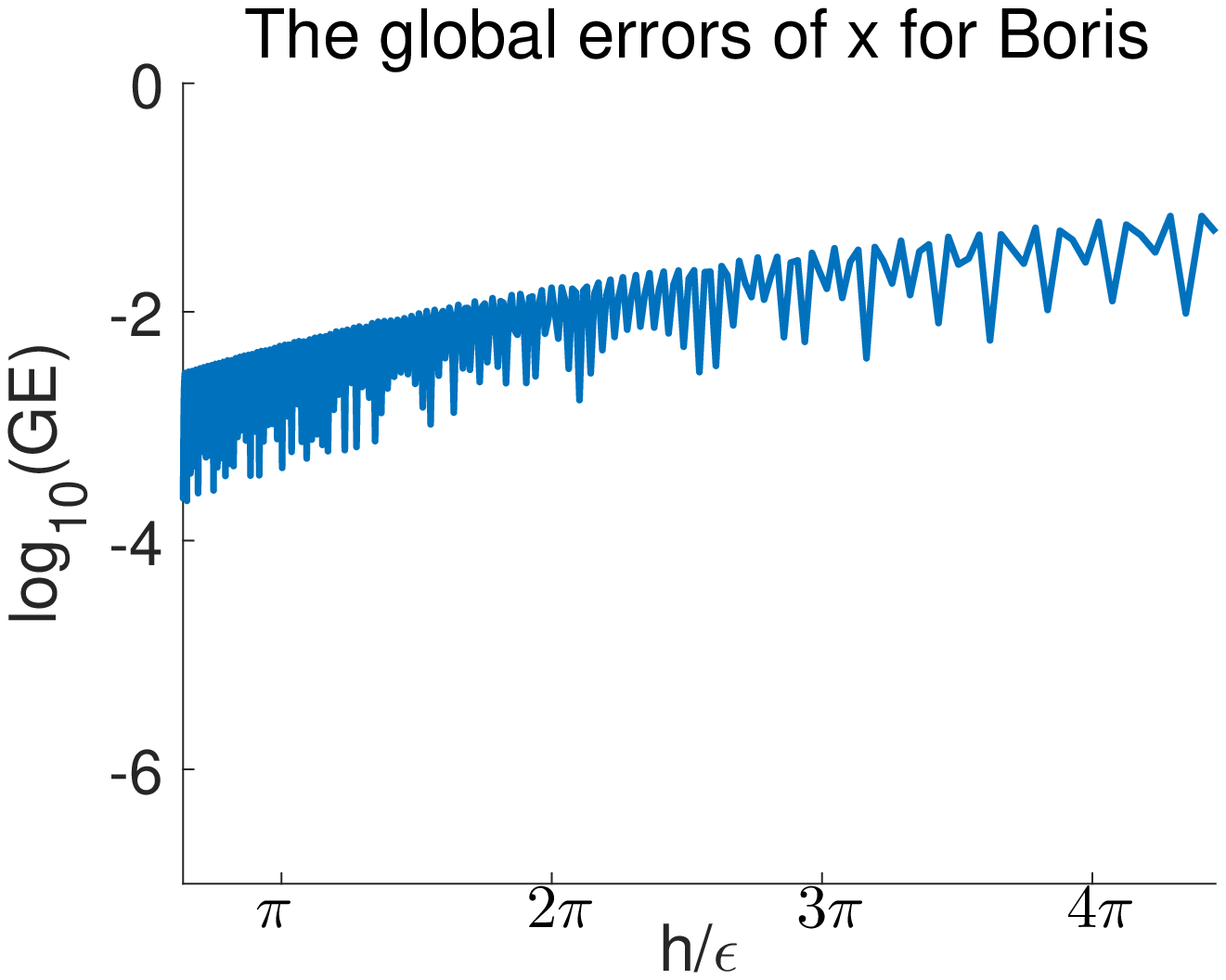}
\includegraphics[width=3.9cm,height=3.8cm]{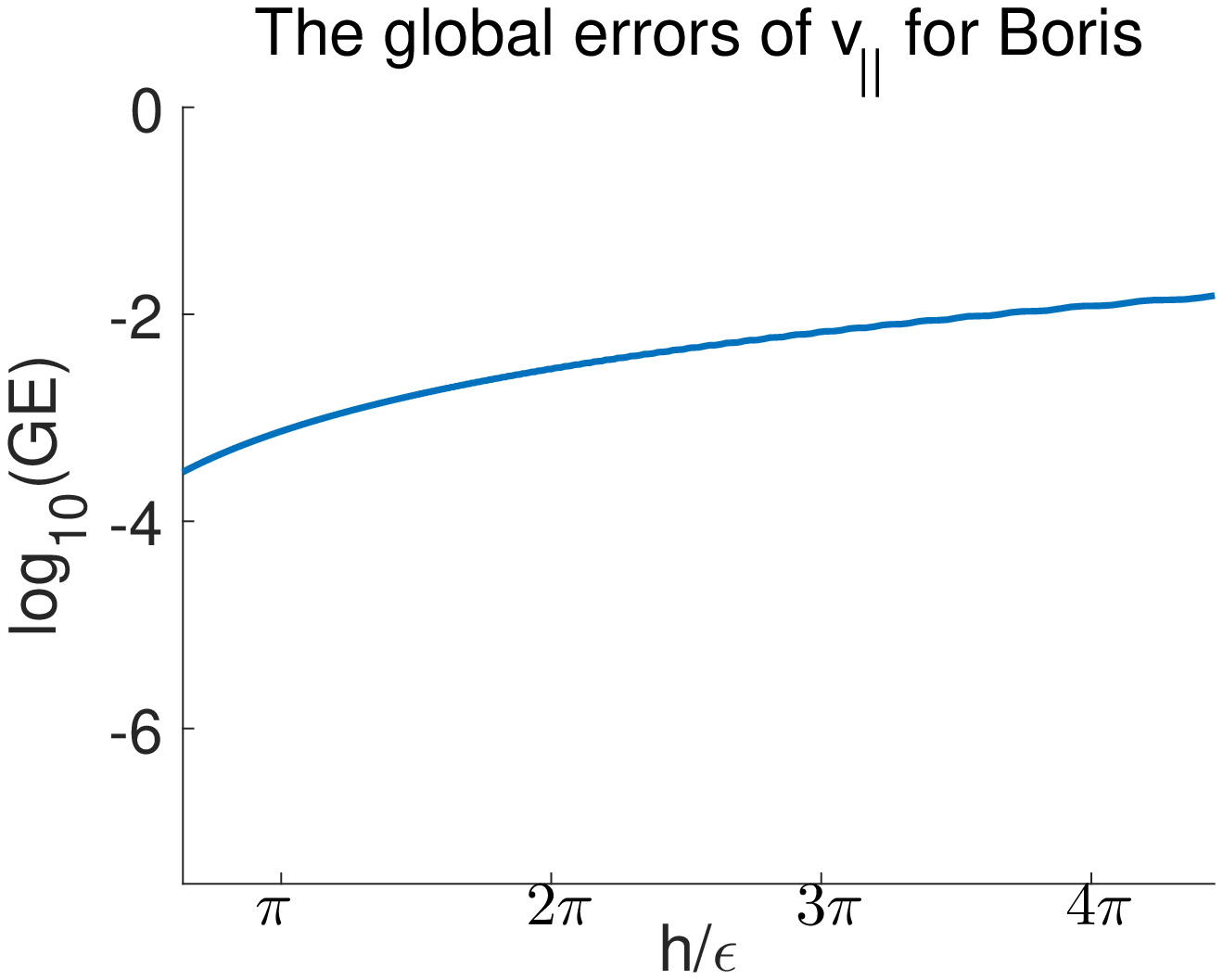}
\includegraphics[width=3.9cm,height=3.8cm]{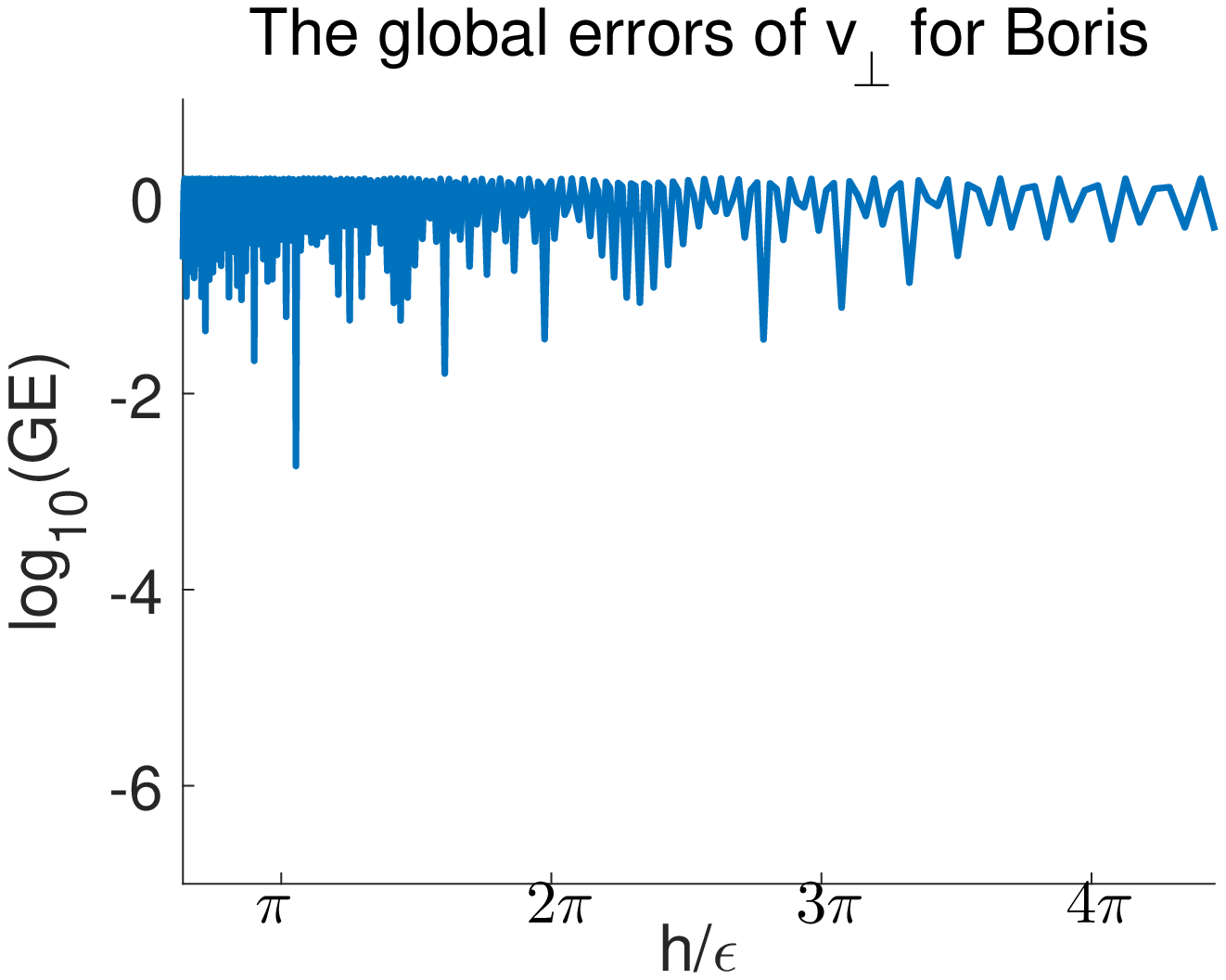}\\
\includegraphics[width=3.9cm,height=3.8cm]{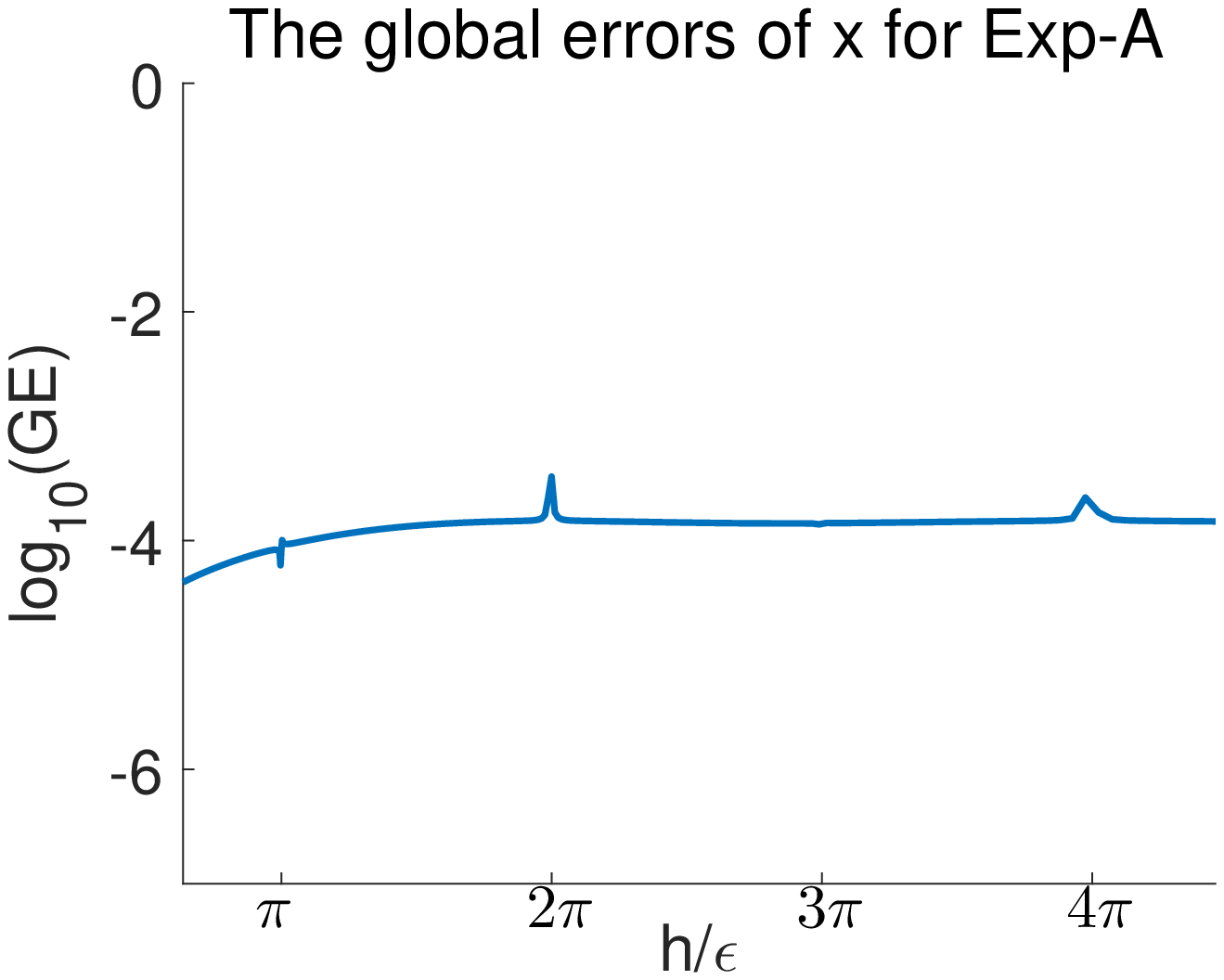}
\includegraphics[width=3.9cm,height=3.8cm]{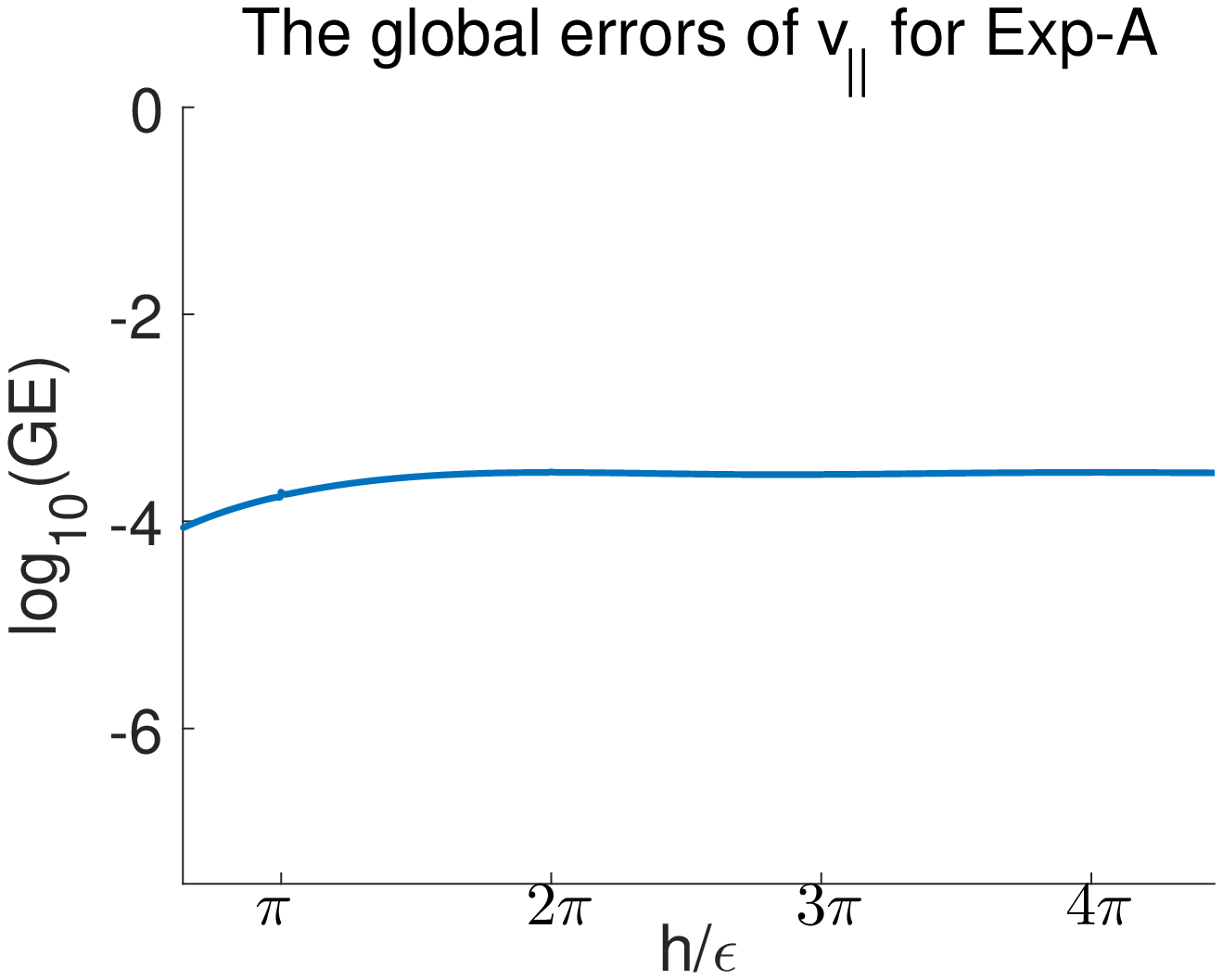}
\includegraphics[width=3.9cm,height=3.8cm]{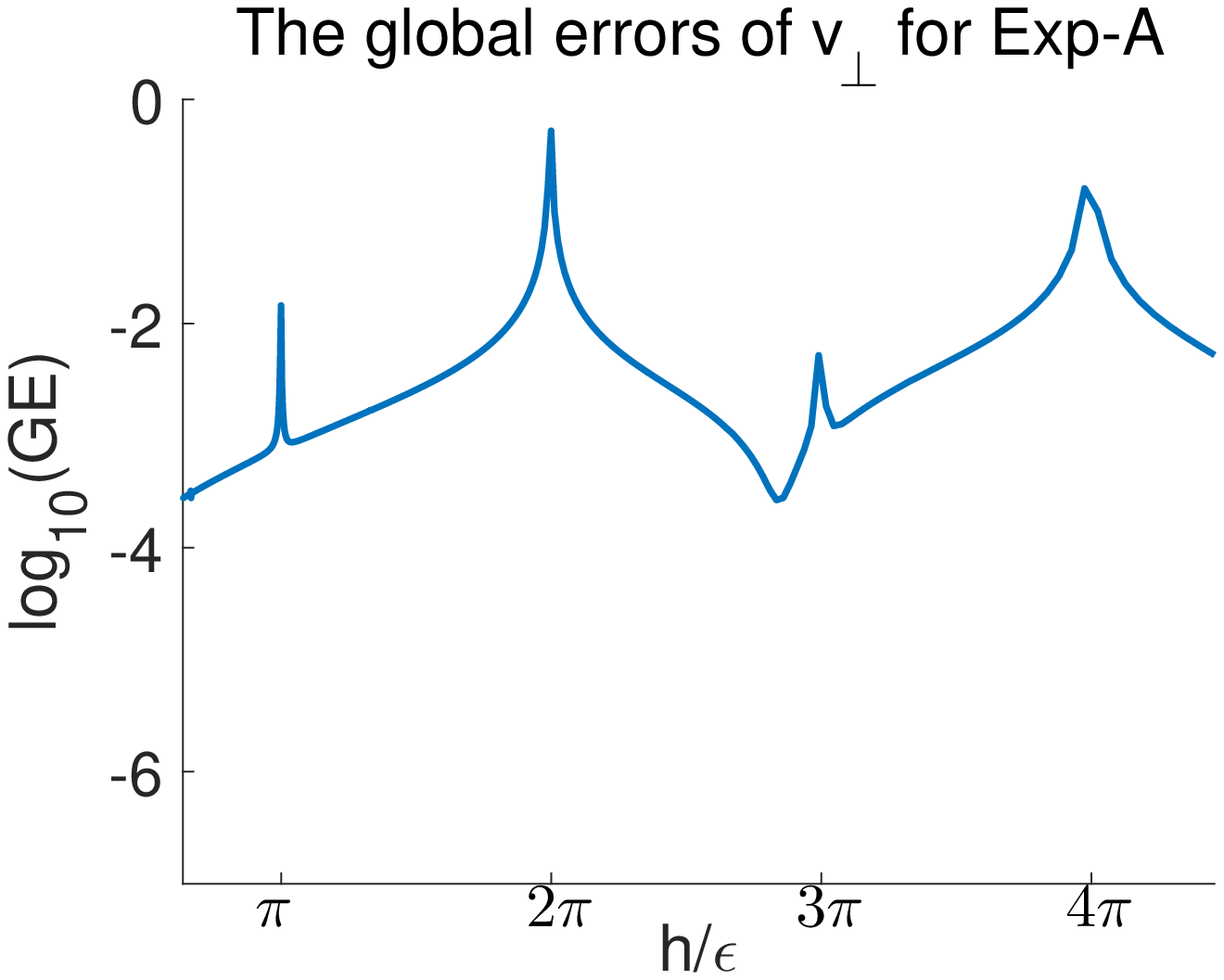}\\
\includegraphics[width=3.9cm,height=3.8cm]{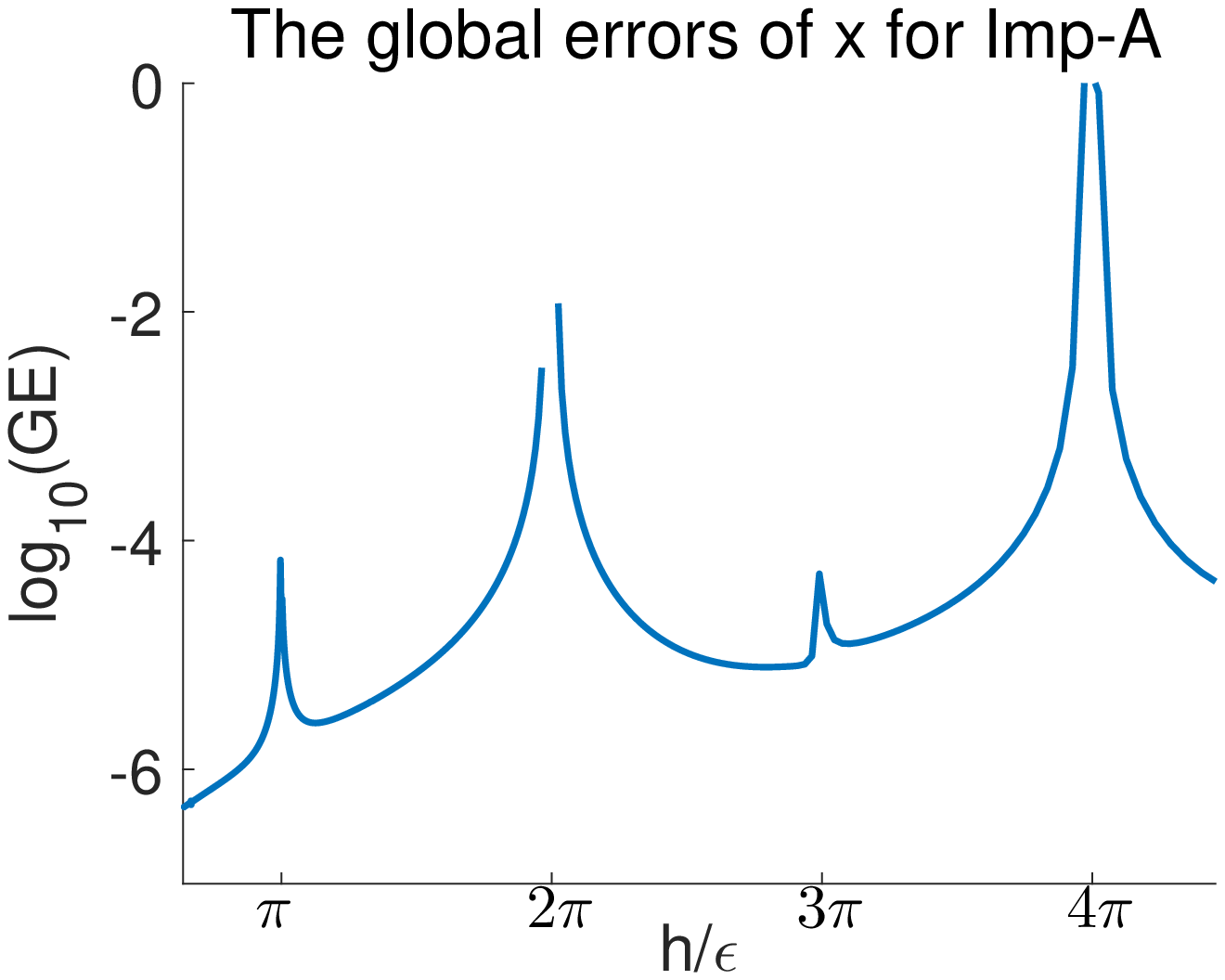}
\includegraphics[width=3.9cm,height=3.8cm]{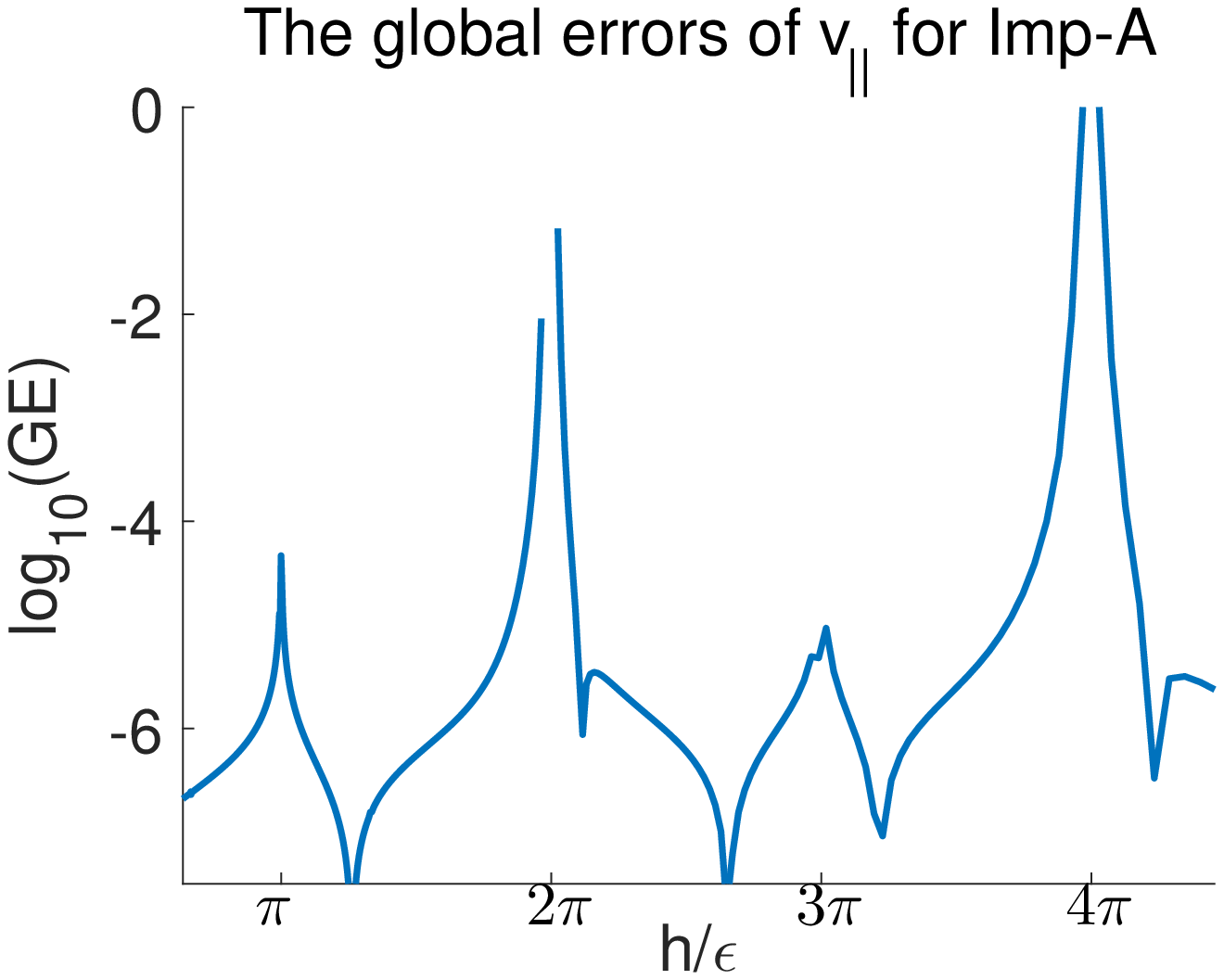}
\includegraphics[width=3.9cm,height=3.8cm]{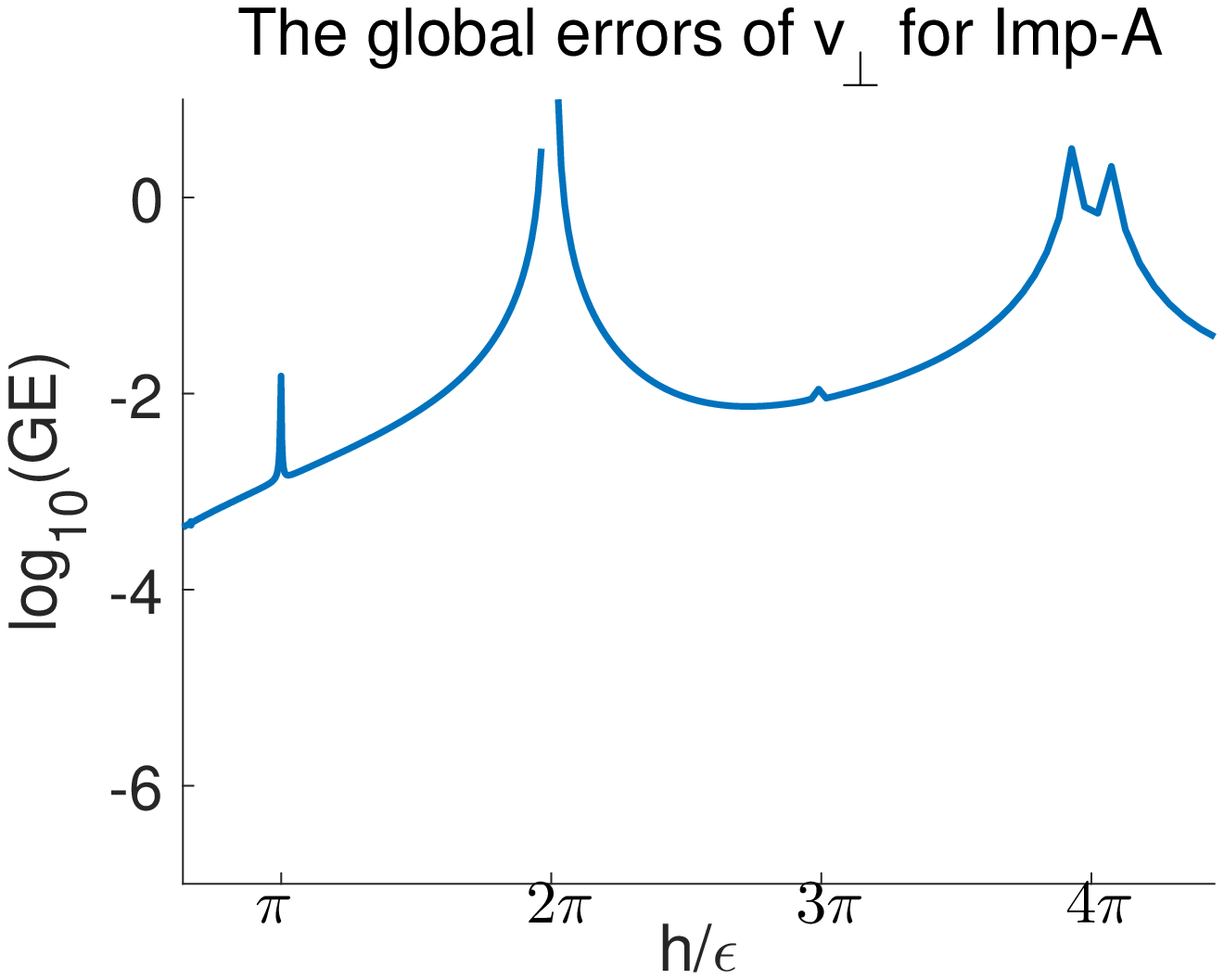}\\
\includegraphics[width=3.9cm,height=3.8cm]{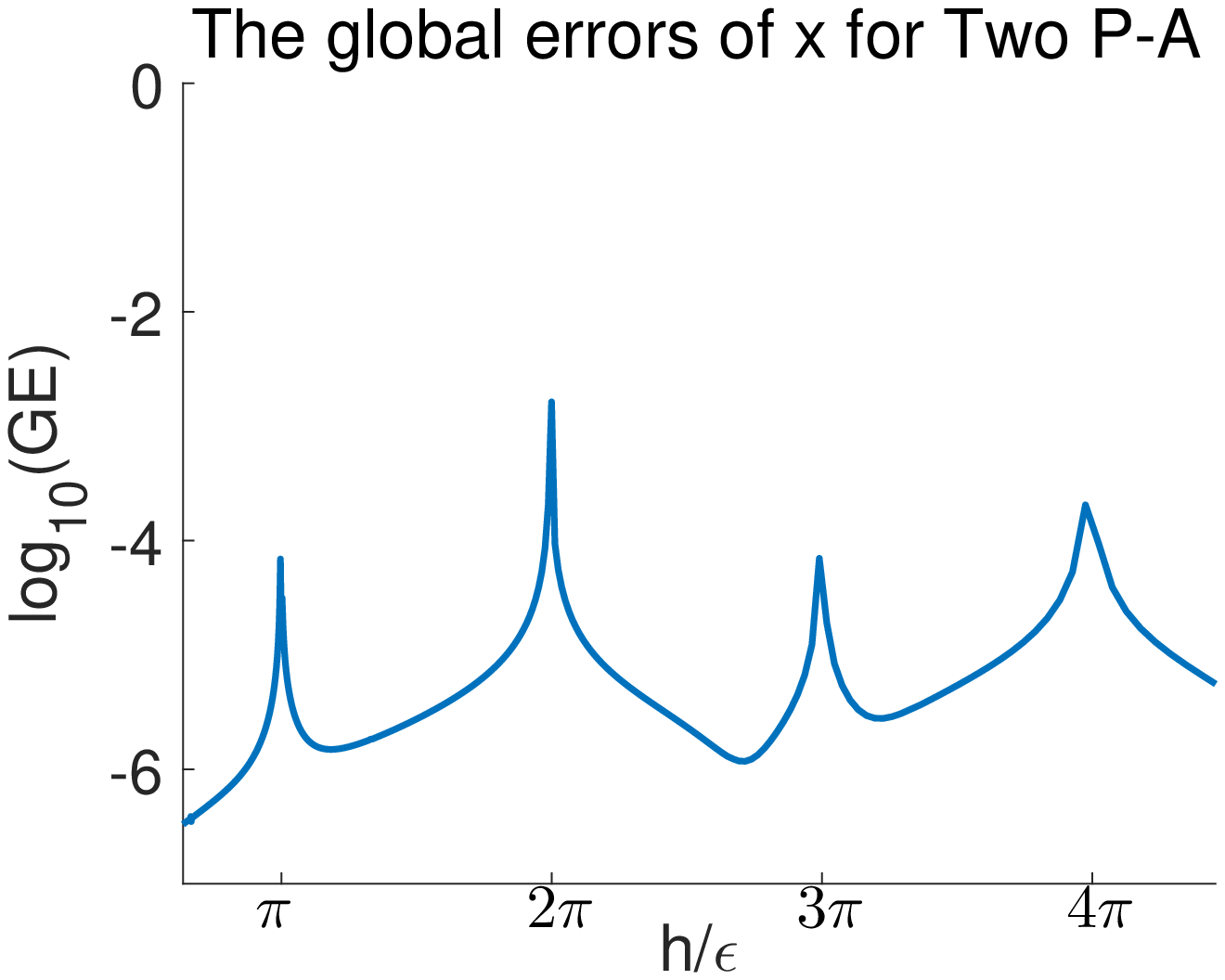}
\includegraphics[width=3.9cm,height=3.8cm]{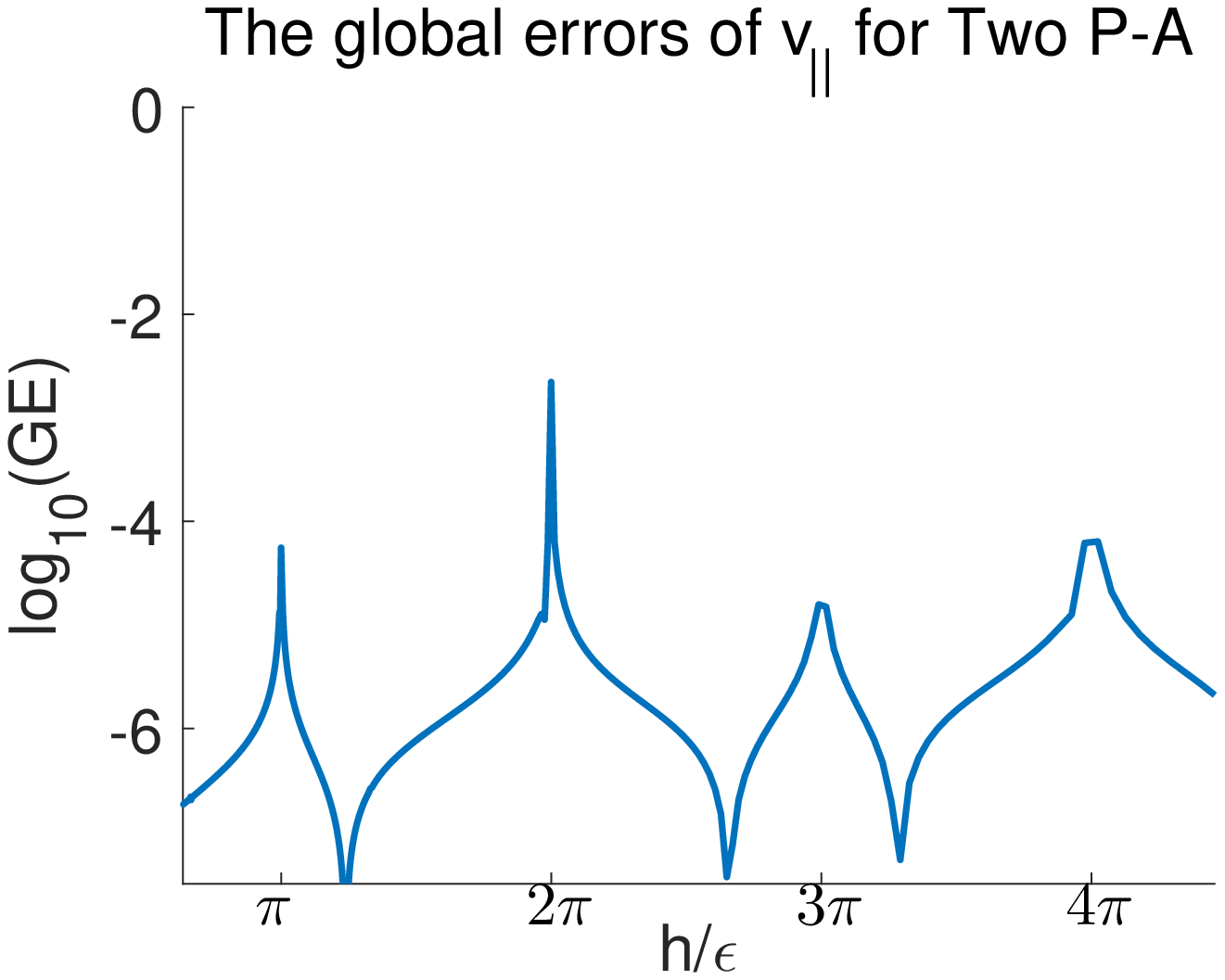}
\includegraphics[width=3.9cm,height=3.8cm]{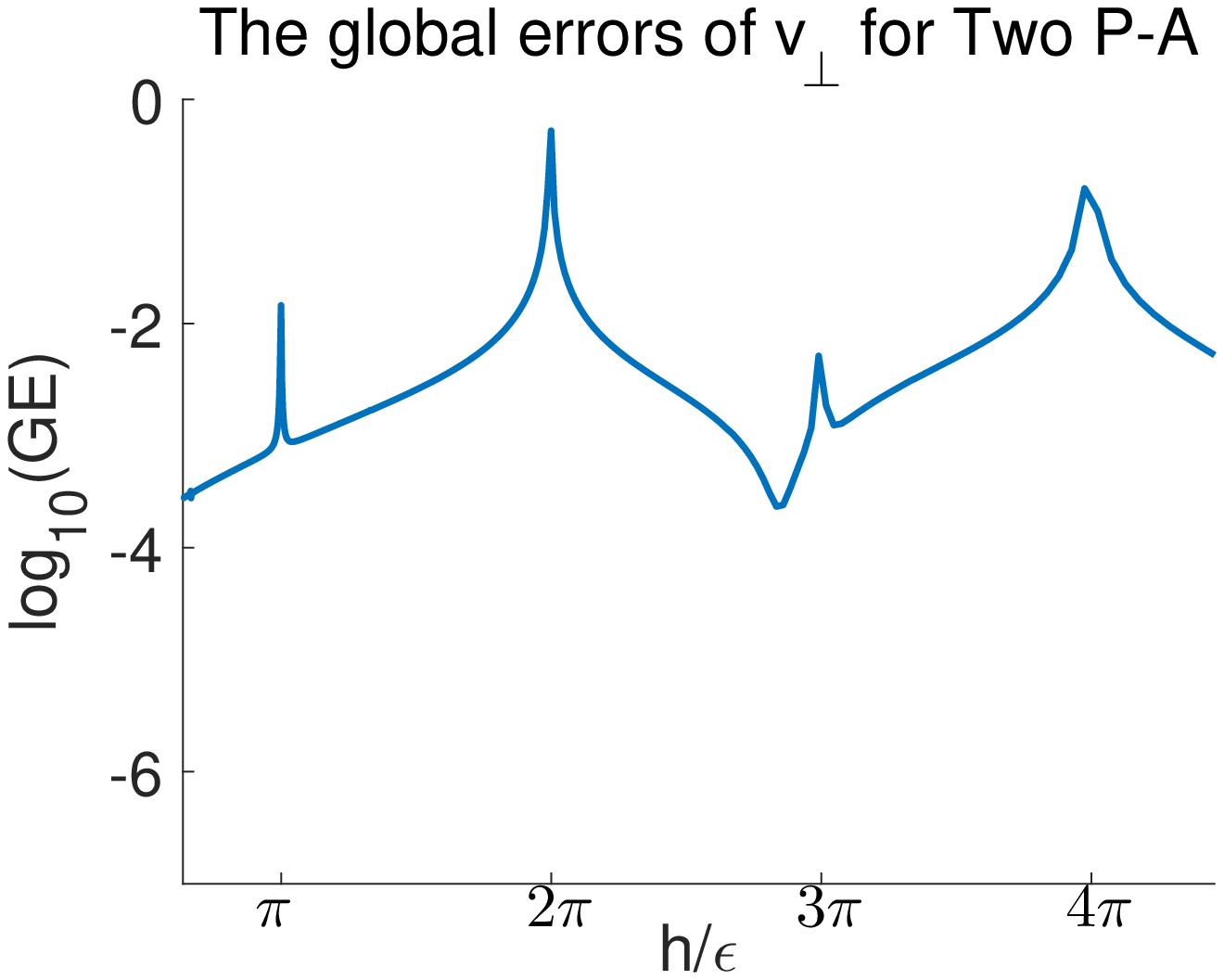}
\caption{ The logarithm of the  global error at  $t = 1$ against
 $h/\epsilon$  for $\epsilon=1/2^{10}$ and $h=1/k$, where $k=60,61,\ldots,600$.}
\label{figure-err11}
\end{figure}

\section*{Appendix: Implementation}

The filtered Boris algorithm requires the computation of matrix functions applied to
a vector. This can be done very efficiently with a Rodriguez-like formula.
Consider a vector $B=(b_1,b_2,b_3)^\top\in \real^3$ and the skew-symmetric
matrix $\widehat B$ of \eqref{skew-hat}, and let $b= |B|$. Assume that the
function $\varphi (\zeta)$ can be expanded into a Taylor series at the origin with real coefficients $c_n$,
and write
\[
\varphi (\iu y ) = \varphi (0) + \iu y\varphi_1 (y) - y^2\varphi_2 (y)
\]
with $\varphi_1(y) = \sum_{j\ge 0} c_{2j+1} (-y^2)^j$ and $\varphi_2(y) = \sum_{j\ge 0} c_{2j+2} (-y^2)^j$.
The fact that
\[
\widehat B^3 = -b^2 \widehat B
\]
implies that
\begin{equation}\label{rodriguez}
\varphi (\widehat B ) = \varphi (0) I + \varphi_1 (b) \widehat B + \varphi_2 (b) \widehat B^2 .
\end{equation}
This permits us to compute $\varphi (\widehat B) v$ by evaluating the scalars
$\varphi (0), \varphi_1 (b), \varphi_2 (b)$, and by forming twice a product of
$\widehat B$ with a vector. Note that $\widehat B v = B\times v $.

For the case that $\varphi (\zeta) $ has only even powers of $\zeta$, we have $\varphi_1 (y)=0$,
and the formula simplifies. Similarly, for the case where only odd powers of $\zeta$
are present, we have $\varphi (0)=0$ and  $\varphi_2 (y)=0$.
For the matrix functions of Algorithm~\ref{alg:boris} we thus have
\begin{align*}
\exp (-h\widehat B) & = I - \frac{\sin (hb)}{b} \widehat B + \frac{1-\cos (hb)}{b^2} \widehat B^2 ,\\
\Psi(h\widehat B) & = I + \frac{1 - \tanc (hb/2)}{b^2} \widehat B^2 ,\\
\Phi_1 (h\widehat B) & = I + \frac{1- \sinc (hb)^{-1}}{b^2} \widehat B^2 ,\\
\Upsilon (h\widehat B) & = \frac{1-\sinc(hb)^{-1} }{hb^2} \widehat B .
\end{align*}

\section*{Acknowledgement} C.L. thanks Eric Sonnendr\"ucker for stimulating discussions during the Oberwolfach workshop 2019-04.
This work was supported by the Fonds National Suisse, Project No.\ 200020-159856, 
by Deutsche For\-schungsgemeinschaft, SFB 1173, and by the Humboldt Foundation.

\renewcommand{\refname}{\normalsize\bf References}
\small
\bibliographystyle{acm}
\bibliography{HLW}

\end{document}